%%%%%%%%%%%%%%%%%%%%%%%%%%%%%%%%%%%%%%%%%%%%%%%%%%%%%%%%%%%%%%%%%%%%%
%%                                                                 %%
%% Please do not use \input{...} to include other tex files.       %%
%% Submit your LaTeX manuscript as one .tex document.              %%
%%                                                                 %%
%% All additional figures and files should be attached             %%
%% separately and not embedded in the \TeX\ document itself.       %%
%%                                                                 %%
%%%%%%%%%%%%%%%%%%%%%%%%%%%%%%%%%%%%%%%%%%%%%%%%%%%%%%%%%%%%%%%%%%%%%

%%\documentclass[referee,sn-basic]{sn-jnl}% referee option is meant for double line spacing

%%=======================================================%%
%% to print line numbers in the margin use lineno option %%
%%=======================================================%%

%%\documentclass[lineno,sn-basic]{sn-jnl}% Basic Springer Nature Reference Style/Chemistry Reference Style

%%======================================================%%
%% to compile with pdflatex/xelatex use pdflatex option %%
%%======================================================%%

%%\documentclass[pdflatex,sn-basic]{sn-jnl}% Basic Springer Nature Reference Style/Chemistry Reference Style

%%\documentclass[sn-basic]{sn-jnl}% Basic Springer Nature Reference Style/Chemistry Reference Style
\documentclass[sn-mathphys]{sn-jnl}% Math and Physical Sciences Reference Style
%%\documentclass[sn-aps]{sn-jnl}% American Physical Society (APS) Reference Style
%%\documentclass[sn-vancouver]{sn-jnl}% Vancouver Reference Style
%%\documentclass[sn-apa]{sn-jnl}% APA Reference Style
%%\documentclass[sn-chicago]{sn-jnl}% Chicago-based Humanities Reference Style
%%\documentclass[sn-standardnature]{sn-jnl}% Standard Nature Portfolio Reference Style
%%\documentclass[default]{sn-jnl}% Default
%%\documentclass[default,iicol]{sn-jnl}% Default with double column layout

%%%% Standard Packages
%%<additional latex packages if required can be included here>
%%%%

%%%%%=============================================================================%%%%
%%%%  Remarks: This template is provided to aid authors with the preparation
%%%%  of original research articles intended for submission to journals published
%%%%  by Springer Nature. The guidance has been prepared in partnership with
%%%%  production teams to conform to Springer Nature technical requirements.
%%%%  Editorial and presentation requirements differ among journal portfolios and
%%%%  research disciplines. You may find sections in this template are irrelevant
%%%%  to your work and are empowered to omit any such section if allowed by the
%%%%  journal you intend to submit to. The submission guidelines and policies
%%%%  of the journal take precedence. A detailed User Manual is available in the
%%%%  template package for technical guidance.
%%%%%=============================================================================%%%%

\jyear{2021}%

%% as per the requirement new theorem styles can be included as shown below
\theoremstyle{thmstyleone}%
\newtheorem{theorem}{Theorem}%  meant for continuous numbers
%%\newtheorem{theorem}{Theorem}[section]% meant for sectionwise numbers
%% optional argument [theorem] produces theorem numbering sequence instead of independent numbers for Proposition
\newtheorem{proposition}[theorem]{Proposition}%

\theoremstyle{thmstyletwo}%
\newtheorem{example}{Example}%
\newtheorem{remark}{Remark}%

\theoremstyle{thmstylethree}%
\newtheorem{definition}{Definition}%
\newcommand{\be}{\begin{eqnarray}}
\newcommand{\ee}{\end{eqnarray}}
\newcommand{\ce}{\begin{eqnarray*}}
\newcommand{\de}{\end{eqnarray*}}
\newtheorem{lemma}[theorem]{Lemma}
\newtheorem{corollary}[theorem]{Corollary}

\def\e{{\mathrm{e}}}

\def\a{\alpha}

\def\[{{\Big[}}
\def\]{{\Big]}}
\def\<{{\langle}}
\def\>{{\rangle}}
\def\({{\Big(}}
\def\){{\Big)}}

\def\bx{{\mathbf{x}}}

\def\dif{{\mathord{{\rm d}}}}

\def\min{{\mathord{{\rm min}}}}

\def\={&\!\!=\!\!&}
\def\bt{\begin{theorem}}
\def\et{\end{theorem}}
\def\bl{\begin{lemma}}
\def\el{\end{lemma}}
\def\br{\begin{remark}}
\def\er{\end{remark}}

\def\bd{\begin{definition}}
\def\ed{\end{definition}}
\def\bp{\begin{proposition}}
\def\ep{\end{proposition}}
\def\bc{\begin{corollary}}
\def\ec{\end{corollary}}
\def\bx{\begin{Examples}}
\def\ex{\end{Examples}}

\def\geq{\geqslant}
\def\leq{\leqslant}

\allowdisplaybreaks

\def\RR{\mathbb{R}}
\def\rmi{\mathrm{i}}
\def\lam{\lambda}
\def\al{\alpha}
\def\S{\mathbb{S}^{d-1}}
\def\EE{\mathbb{E}}
\def\ga{\gamma}

\raggedbottom
%%\unnumbered% uncomment this for unnumbered level heads

\begin{document}

\title[Multivariate stable approximation by Stein's method]{Multivariate stable approximation by Stein's method}

%%=============================================================%%
%% Prefix	-> \pfx{Dr}
%% GivenName	-> \fnm{Joergen W.}
%% Particle	-> \spfx{van der} -> surname prefix
%% FamilyName	-> \sur{Ploeg}
%% Suffix	-> \sfx{IV}
%% NatureName	-> \tanm{Poet Laureate} -> Title after name
%% Degrees	-> \dgr{MSc, PhD}
%% \author*[1,2]{\pfx{Dr} \fnm{Joergen W.} \spfx{van der} \sur{Ploeg} \sfx{IV} \tanm{Poet Laureate}
%%                 \dgr{MSc, PhD}}\email{iauthor@gmail.com}
%%=============================================================%%

\author[1]{\fnm{Peng} \sur{Chen}}\email{chenpengmath@nuaa.edu.cn}
\equalcont{These authors contributed equally to this work.}

\author[2]{\fnm{Ivan} \sur{Nourdin}}\email{ivan.nourdin@uni.lu}
\equalcont{These authors contributed equally to this work.}

\author*[3,4]{\fnm{Lihu} \sur{Xu}}\email{lihuxu@um.edu.mo}
\equalcont{These authors contributed equally to this work.}

\author[5]{\fnm{Xiaochuan} \sur{Yang}}\email{xiaochuan.j.yang@gmail.com}
\equalcont{These authors contributed equally to this work.}

\affil[1]{\orgdiv{School of Mathematics}, \orgname{Nanjing University of Aeronautics and
Astronautics}, \orgaddress{\street{29 Jiangjun Avenue, Jiangning District}, \city{Nanjing}, \postcode{211106}, \state{Jiangsu}, \country{China}}}

\affil[2]{\orgdiv{Unit\'{e} de Recherche en Math\'{e}matiques}, \orgname{Universit\'{e} du Luxembourg}, \orgaddress{\street{Maison du Nombre, 6 avenue de la Fonte, L-4364 Esch-sur-Alzette}, \country{Grand Duchy of Luxembourg}}}

\affil*[3]{\orgdiv{Department of Mathematics, Faculty of Science and Technology}, \orgname{University of Macau}, \orgaddress{\street{Avenida da Universidade Taipa}, \city{Macau S.A.R.}, \country{China}}}

\affil[4]{\orgname{Zhuhai UM Science \& Technology Research Institute}, \orgaddress{\city{Zhuhai}, \state{Guangdong}, \country{China}}}

\affil[5]{\orgdiv{Department of Mathematics}, \orgname{Brunel University}, \orgaddress{\street{UB8 3PH}, \city{London}, \country{United Kingdom}}}

%%==================================%%
%% sample for unstructured abstract %%
%%==================================%%

\abstract{By a delicate analysis for the Stein's equation associated to the $\alpha$-stable law approximation with
$\alpha \in (0,2)$, we prove a quantitative stable central limit theorem in Wasserstein type distance, which generalizes the results in the series of work \cite{Xu19, CNX19+,CNXYZ19+} from the univariate case to the multiple variate case. From an explicit computation for Pareto's distribution, we see that the rate of our approximation is sharp. The analysis of the Stein's equation is new and has independent interest.}

\keywords{multivariate $\alpha$-stable approximation, Stein's method, generalized central limit theorem, rate of convergence, Wasserstein(-type) distance, fractional Laplacian}

%%\pacs[JEL Classification]{D8, H51}

%%\pacs[MSC Classification]{35A01, 65L10, 65L12, 65L20, 65L70}

\maketitle

\section{Introduction}

This paper is concerned with the multivariate stable approximation by Stein's method. A probability measure $\pi$ on $\RR^d$ with $d\ge 2$ is \emph{strictly stable} if, for any $a>0$, there is $b>0$ such that
$\widehat{\pi}(\lambda)^a = \widehat{\pi}(b\lambda), \lambda\in\mathbb{R}^{d}$,
where $\widehat{\pi}$ is the characteristic function of $\pi$. For strictly stable measures, $a$ and $b$ have to satisfy the relation $b=a^{1/\al}$ where $\al\in(0,2)$ is  the \emph{stability parameter}.   A strictly $\al$-stable measure $\pi$ is characterised by a finite non-zero \emph{spectral measure} $\nu$ on the sphere $\mathbb S^{d-1}$ and, in and only in the  case $\al=1$, a vector $\ga\in\RR^d$, see \cite[Remark 14.6]{S}. Our working assumption in the case  $\al=1$ is that $\gamma=0$ and the center of mass of $\nu$ vanishes, namely, $\int_{\S}\theta \nu(d\theta)=0$. The first condition is artificial and the second is equivalent to strict stability of $\pi$ in the case $\al=1$. Under the condition $\ga=0$, we have a unified representation for any $\al\in (0,2)$,
\begin{align}\label{e:fourierpi}
\widehat{\pi}(\lambda)=\exp\left[ -\int_{\mathbb S^{d-1}}\nu(d\theta)\int_0^\infty (e^{\rmi\langle \lam, r\theta\rangle }-1-\rmi\langle \lam, r\theta\rangle k_\al(r) )  \frac{dr}{r^{1+\al}}\right], \quad \lambda\in\mathbb{R}^{d},
\end{align}
where $k_\al(r)= \mathbf{1}_{\al=1, r\in (0,1]} + \mathbf{1}_{\al\in(1,2)}$. The family of strictly stable laws is therefore as rich as the family of finite measures on $\S$. From now on, let $\psi$ denote the integral in the exponent of \eqref{e:fourierpi} and call it the \emph{characteristic exponent}.

%\begin{align}\label{e:fourierpi}
%\widehat{\pi}(\lambda)=\exp\left[ -\int_{\mathbb S^{d-1}} \vert \langle \lambda,\theta\rangle\vert ^\al (1-\mathrm{i} \tan \frac{\pi\al}{2} \sgn \langle \lambda,\theta\rangle  ) \nu(d\theta)\right], \quad \lambda\in\mathbb{R}^{d}.
%\end{align}
%In the case $\alpha=1$, we have the following representation \cite[Theorem 14.10]{S}:
%\begin{align}\label{character1}
%\widehat{\pi}(\lambda)=\exp\left[ -\int_{\mathbb S^{d-1}} \vert \langle \lambda,\theta\rangle\vert  ...imaginary part... \nu(d\theta)\right],
%\end{align}

\smallskip
The spectral measure $\nu$ plays a crucial role in the study of multivariate stable laws. The distributional properties of $\pi$ change dramatically from one type of $\nu$ to another.   For instance, if $\nu$ is the uniform probability measure on $\S$, then $\psi(\lam)=\sigma \vert \lam\vert ^\al$ with $\sigma>0$ so that $\pi$ is rotationally invariant. Hereafter, $\vert a\vert $ denotes the Euclidean norm of $a\in\RR^d$.   Another example is when $\nu = \sum_{i=1}^d \delta_{e_i}+\delta_{-e_i}$ where $\delta_.$ denotes the Dirac mass at some point and $\{e_i,1\le i\le d\}$ is the canonical basis of $\RR^d$, then $\psi(\lam) = \sum_{i=1}^d \sigma_i \vert \lam_i\vert ^\al$ for some $\sigma_i>0$ so that the marginal distributions of $\pi$ are independent one-dimensional symmetric stable laws. A third type of example is when $\nu$ is a fractal measure on $\S$, then $\pi$ can be wildly anisotropic with correlated marginals.  In this paper, we shall consider not only each of the aforementioned types of $\nu$, but also mixtures of these types.

\medskip
To assess convergence rates of a sequence towards a multivariate stable law, we use Stein's method -- a vast range of ideas and tools that allow one to study the proximity between a probability measure and a target distribution. The scope of the method has been considerably extended since Stein \cite{Stein86} proposed his elegant approach for normal approximation. In particular, Barbour \cite{Barbour88} devised the generator approach which is applicable to target distributions that can be realized as the stationary distribution of a "nice" Markov process. Barbour's approach is the one adopted in this paper and it takes the following steps. First, one constructs a Markov process $(X_t)_{t\ge 0}$ with infinitesimal generator $\mathcal A$ that converges in distribution to $\pi$ as $t\to\infty$ for any initial condition $X_0=x\in\RR^d$. Second, one considers Stein's equation (or Poisson equation in the PDE literature)
\begin{equation}\label{e:SE}
\mathcal A f(x) = h(x)-\pi(h)
\end{equation}
with $h\in L^1(\pi)$ and $\pi(h):=\int h(x)\pi(dx)$. By exploiting properties of the transition semigroup $(Q_t)_{t\ge 0}$ determined by $\mathcal A$, in particular $Q_0 h=h, Q_\infty h=\int h(x)\pi(dx)$ and the relation $\frac{d}{dt} Q_t = \mathcal A Q_t$,  one argues that
\begin{align}\label{e:f_h}
f_h(x):= - \int_0^\infty Q_t \big(h(x)-\pi(h)\big) dt
\end{align}
is in the domain of $\mathcal A$ and solves \eqref{e:SE}. Third, one uses the integral form \eqref{e:f_h} and properties of $(Q_t)_{t\ge 0}$ to derive regularity estimates for the solution \eqref{e:f_h}. To see why these steps lead to an upper bound for the distance between an arbitrary distribution and $\pi$,  let $Z$ denote a strictly stable random vector with distribution $\pi$, for any $\RR^d$-valued random vector $F$, one has
\begin{align*}
\EE[h(F)] - \EE[h(Z)] = \EE[\mathcal A f_h(F)].
\end{align*}
Ranging $h$ in a class of functions that is large enough to guarantee convergence in distribution, and using the regularity estimates of \eqref{e:f_h} obtained in the third step, together with the explicit form of $\mathcal A$, one would obtain an upper bound for a certain distance between $F$ and $Z$.

\medskip
Carrying out rigorously each of the aforementioned steps and claims in the context of stable approximation is a non-trivial task. In dimension one, Xu \cite{Xu19} considered the case of symmetric $\al$-stable law with $\al>1$. The approach of \cite{Xu19} was then generalized in \cite{CNX19+} to asymmetric $\al$-stable law with $\al>1$, and in \cite{AH19} to a class of infinitely divisible distributions with finite first moment. Later, Chen \textit{et al.} \cite{CNXYZ19+} considered non-integrable $\al$-stable approximation ($\alpha\le 1$). In higher dimensions, Arras and Houdr\'e \cite[Theorem 4.2]{AH19+} executed the aforementioned second step (construction of the solution to  Stein's equation) for a class of self-decomposable distributions which includes multivariate stable laws.  However, regularity estimates of the solution are obtained only when test functions have their 0-th, first and second partial derivatives bounded by 1. Therefore, the results in \cite{AH19+} cannot be used to derive bounds for multivariate stable approximation in Wasserstein(-type) distance that we address in this paper.   %no application towards obtaining the rate of convergence for limit theorems can be made from the results of \cite{AH19+}.

\medskip
The main contribution of this paper is  the regularity estimates for the solution to Stein's equation in the context of multivariate stable approximation and \emph{Lipschitz(-type) test functions}, which in turn allows to obtain Wasserstein bounds. Such bounds were not available in previous work.  Our approach relies on delicate density estimates of multivariate strictly stable laws. Recent advances on heat kernel estimates of anisotropic non-local operators e.g. \cite{W07, ChZh16, C-Z3,B-S-3} allow us to handle a diversified class of spectral measures.  Since real life high dimensional data are often anisotropic (see \cite{MBB99,ER07,DWZ20} and the references therein), the rich class of spectral measures that we consider would widen the applicability of our results.  In terms of application, we provide the rate of convergence for the classical multivariate stable limit theorem.

\medskip

The rest of this paper is organized as follows.  After introducing the Markov process  converging to $\pi$, we construct a solution to Stein's equation (Proposition \ref{p:solution}), present the regularity estimates for the solution (Theorem \ref{t:reg}) and obtain Wasserstein bounds for multivariate stable approximation (Theorem \ref{t:bound}).   Theorem \ref{t:reg} is proved in Section \ref{s:reg} and Theorem \ref{t:bound} is proved in Section \ref{s:bound}.  Example is given in Section \ref{s:example}.

\section{Preliminaries and statement of the main results}

\subsection{Ornstein-Uhlenbeck type processes}\label{s:OU}
 The Markov process we construct in the first step of Barbour's program is the so-called Ornstein-Uhlenbeck type process which is a simple stochastic differential equation (SDE) driven by stable L\'evy processes.  We refer the reader to  Applebaum \cite{A} for background on stochastic calculus of L\'evy processes, and Sato \cite{S} for general facts about L\'evy processes.

Let $(Z_t)_{t\ge 0}$ be a strictly stable L\'evy process, a process with independent and stationary increments having marginal $Z_1$ distributed as $\pi$, given by \eqref{e:fourierpi}.  Consider the SDE
\begin{align}\label{e:OU}
\begin{cases} X_{t}= - \frac{1}{\al}\int_0^t X_s ds+ Z_{t} \\
X_0=x
\end{cases},
\end{align}
Such an equation can be solved explicitly
\begin{align}\label{e:OU_explicit}
X^x_t = xe^{-\frac t \alpha} + \int_0^t e^{-\frac {t-s} \alpha} dZ_s,
\end{align}
see \cite[p.105]{S}, and provides an interpolation between any Dirac mass and $\pi$. This follows from the fact that $(X^x_t)_{t\ge 0}$ is a scaled and time-changed L\'evy process, i.e.
\begin{align}\label{e:time-change}
X^x_t \overset{d}= xe^{-\frac t \alpha} + e^{-\frac t \alpha} Z_{e^t-1} \overset{d}= xe^{-\frac t \alpha} +  Z_{1-e^{-t}},
\end{align}
see \cite[Section 2.3]{CNXYZ19+}, where $\overset{d}=$ denotes equality in distribution. For the second equality we have used the self-similarity of the process $(Z_t)_{t\ge 0}$, namely $Z_{ct}\overset{d}= c^{1/\al}Z_t$ in distribution for any $c,t>0$.  One sees that as $t\to\infty$, $X^x_t$ converges in distribution to $Z_1\sim \pi$. For another proof of the latter fact, one may check the condition of a general result \cite[Th. 17.5]{S} for self-decomposable distributions.

An application of It\^o's formula for semimartingales with jumps to $(X^x_t)_{t\ge 0}$ shows  that (see \cite[Chapter 6]{A} for details) the generator of $X$ is
\begin{align}\label{e:operator_OU}
\mathcal A^{\al,\nu} f(x):= \mathcal L^{\al,\nu} f(x)  - \frac{1}{\al} \langle x, \nabla f(x)\rangle,
\end{align}
where recalling the definition of $k_\al(r)$ \eqref{e:fourierpi},
\begin{align}\label{representation}
\mathcal L^{\al,\nu} f(x) = d_\al    \int_{\mathbb S^{d-1}} \nu(d\theta)  \int_0^\infty ( f(x+ r\theta) - f(x) - k_\al(r) \langle r\theta, \nabla f(x) \rangle  )  \frac{dr}{r^{1+\al}}.
\end{align}
Here $d_{\alpha}^{-1}=\int_{0}^{\infty} (1-\cos y )y^{-1-\al}dy =\al^{-1}\Gamma(1-\alpha)\cos\frac{\pi\alpha}{2}$, $\al\in(0,2)\setminus\{1\}$ with $d_1= \lim_{\al\to 1} d_\al= 2/\pi$,  and $\nu$ is normalized to have total mass 1.

\subsection{Solving Stein's equation}

Now one can write down Stein's equation associated with the multivariate stable distribution $\pi$ as follows
\begin{align}\label{e:steinsequation}
\mathcal A^{\al,\nu} f(x) = h(x) - \pi(h),
\end{align}
where $h\in L^1(\pi)$.  In view of obtaining bounds in Wasserstein distance, we consider $h$ belonging to the space $\mathrm{Lip}_1$ of Lipschitz continuous functions with Lipschitz constant at most 1. It is standard that $\mathrm{Lip}_1\subset L^1(\pi)$ if $\al>1$, while $\mathrm{Lip}_1\cap L^1(\pi)$ is a strict subset of $\mathrm{Lip}_1$ if $\al\le 1$.  Whenever $\al\le 1$, we let $0<\beta<\al$ and consider $h\in \mathcal{H}_{\beta}:=\mathrm{Lip}_1 \cap \textrm{H\"ol}(\beta,1)$ where $\textrm{H\"ol}(\beta,1)$ is the class of H\"older continuous functions of order $\beta$ and H\"older constant at most 1. Namely $h\in \mathcal{H}_{\beta}$ means
\begin{align*}
\vert h(x)- h(y)\vert \le \vert x-y\vert \wedge \vert x-y\vert ^\beta.
\end{align*}
The Lipschitz continuity imposes smoothness and the H\"older condition imposes global growth rate of the function $h$ which is crucial to make sense of \eqref{e:steinsequation}  in the case $\al\le 1$ thanks to the simple inclusion $\textrm{H\"ol}(\beta,1)\subset L^1(\pi)$.

We construct a solution to  Stein's equation by using the process $(X^x_t)_{t\ge 0}$, as described in the introduction.  Denote by $p(t,x):=p_t(x)$ the density of the driving process $(Z_t)_{t\ge 0}$ in \eqref{e:OU}.  Write $p(x):=p_1(x)$.  By \eqref{e:time-change}, one sees that
\begin{equation}\label{density}
  q(t,x,y)=p_{1-e^{-t}}(y-e^{-t/\alpha}x)=s(t)^{-d/\alpha}p(s(t)^{-1/\alpha}(y-e^{-t/\alpha}x)),
\end{equation}
where $y\mapsto q(t,x,y)$ is the density of $X^x_t$,   $s(t)=1-e^{-t}$ and we used the self-similarity of $(Z_t)_{t\ge 0}$ in the second equality.

\begin{proposition}[Solution to Stein's equation]\label{p:solution}
Suppose $h\in \mathrm{Lip}_1$ if $\al>1$ and $h\in\mathcal H_\beta$ for some $\beta<\al$ if $\al\le 1$. Set
\begin{align}\label{solution}
f(x)&:=-\int_{0}^{\infty}\mathbb{E}\big[h\big(X_{t}^{x}\big)-  \pi(h)\big]dt,\nonumber \\
& = -\int_0^\infty \int
p_{1-e^{-t}}(y-e^{-\frac{t}{\alpha}}x)(h(y)-\pi(h))dydt\nonumber\\
&=-\int_{0}^{\infty}\int_{\mathbb{R}^{d}}p(y)\big[h((1-e^{-t})^{1/\alpha}y+e^{-t/\alpha}x)-h(y)\big]dydt.
\end{align}
Then $\mathcal A^{\al,\nu} f$ is well-defined and $f$ solves Stein's equation \eqref{e:steinsequation}.
\end{proposition}

The proof of this Proposition somewhat standard in view of recent developments \cite{Xu19, CNX19+,CNXYZ19+} on stable approximation with Lipschitz(-type) test functions, we give a proof in the Appendix for the sake of completeness.

\begin{remark}\label{rem}
\begin{itemize}
\item[(i)] Note that the last two identities follow from \eqref{density} and a change of variables.
We end this subsection by verifying that \eqref{solution} is well-defined. When $\alpha\in(1,2)$, this is obvious. When $\alpha\in(0,1]$,
since $h\in\mathcal H_\beta$, we have
\begin{align*}
&\big\vert h((1-e^{-t})^{1/\alpha}y+e^{-t/\alpha} x)- h(y)\big\vert  \\
%&\le \bigg\vert e^{-t/\al}\vert x\vert + \big(1-(1-e^{-t})^{1/\al}\big)\vert y\vert \bigg\vert  \wedge \bigg\vert e^{-t/\al}\vert x\vert + \big(1-(1-e^{-t})^{1/\al}\big)\vert y\vert \bigg\vert ^\be \\
&\le e^{-t/\alpha}\vert x\vert  + e^{-t\beta/\alpha} \vert x\vert ^\beta +  \vert y(1-(1-e^{-t})^{1/\alpha})\vert \wedge \vert y(1-(1-e^{-t})^{1/\alpha})\vert ^\beta,
\end{align*}
which is integrable with respect to $1_{t>0}dt\otimes p(y)dy$, as desired.
\item[(ii)] Throughout the paper, we often use the facts that $\mathbb{E}\vert Z_{1}\vert ^{\beta}<\infty$ for any $\beta\in(0,\alpha)$, where $Z_{t}$ is the strictly $\alpha$-stable L$\acute{e}$vy process. For the convenience of readers, we obtain the moment estimate of $Z_{1}$ in Lemma \ref{moment} in Appendix \ref{amoment}.
\end{itemize}
\end{remark}

\subsection{Probability metric}
In the case $\al>1$, we shall consider multivariate stable approximation in the classical Wasserstein distance given by
\begin{align*}
d_{W}(\mu_{1},\mu_{2})=\sup_{h\in\mathrm{Lip}_1}\big\vert \mu_{1}(h)-\mu_{2}(h)\big\vert .
\end{align*}
In the case $\al\le 1$, we shall consider Wasserstein-type distance given by
\begin{align*}
d_{W_{\beta}}(\mu_{1},\mu_{2})=\sup_{h\in\mathcal{H}_{\beta}}\big\vert \mu_{1}(h)-\mu_{2}(h)\big\vert ,\quad 0<\beta<\alpha.
\end{align*}

\subsection{Spectral measures}\label{s:zooofspectral}

Obtaining sharp density estimates for general multivariate stable law is very sensitive to the form of the spectral measure. The seminal work of Watanabe \cite{W07} seems to be the first which identifies the best, worse and a range of different rates of decay of stable densities in relation to the spectral measures.

For our purpose, we need not only density estimates but also  gradient and fractional derivative (the operator $\mathcal L^{\al,\nu}$) estimates of stable densities. Our setting is that of Bogdan \emph{et al.} \cite{B-S-3}. In fact,  \cite{B-S-3} addressed L\'evy-type operators with a jump kernel equivalent to the L\'evy measure of a general multivariate stable law. The condition on the spectral measure therein is that the corresponding L\'evy measure is a $\ga$-measure.  Let $\mu$ denote the L\'evy measure of a multivariate strictly stable distribution, i.e., for  measurable $A\subset\RR^d$,
\begin{align*}
\mu(A):= \int_{\S} \nu(d\theta) \int_0^\infty \mathbf{1}_{A}(r\theta) \frac{dr}{r^{1+\al}}.
\end{align*}

\begin{definition}

We say $\mu$ is a $\ga$-measure for some $\ga\ge 0$ if there exists a finite $c>0$ such that for all $x\in\S$ and $0<r<1/2$, one has
\begin{align*}
\mu(B(x,r)) \le c r^\ga.
\end{align*}
Here $B(x,r)$ is the Euclidean ball with center $x$ and radius $r>0$.
\end{definition}

It is easy to see that the L\'evy measure of stable laws are $\ga$-measures with $\ga\in [1,d]$.  The following examples show that this is a rather general setting.

\begin{example}\label{ex:abs}
Suppose that $\nu$ is absolutely continuous with respect to uniform measure on $\S$ with density $\kappa$ bounded from above and below. Then for measurable $A\subset\RR^d$,
\begin{align*}
\mu(A) = \int_{A} \frac{\kappa(x/\vert x\vert ) dx}{\vert x\vert ^{d+\al}}.
\end{align*}
This is the setting (restricting to the L\'evy case) of Chen and Zhang \cite{ChZh16, C-Z3} where the densities, their gradient and fractional derivatives are estimated. One readily checks that $\mu$ is a $d$-measure.
\end{example}

\begin{example}
Suppose that $\nu=\sum_{i=1}^k  a_i  \delta_{x_i}$  where $\delta_\cdot$ is a Dirac mass and $\{x_1,...,x_k\} \subset \S$. Then $\mu$ is a $d$-measure. In the particular case where $x_i$'s are the canonical basis and $k=d$, we have a multivariate stable law with independent marginals. The desired estimates follows from their counterparts in dimension 1.
\end{example}

\begin{example}
Let $1\le \ga<d$. Suppose that $\nu$ is supported on $E\subset \S$ which is Ahlfors regular of order $\ga-1$, namely, there exists $c$ such that
\begin{align*}
\mathcal H^{\ga-1}(B(x,r)\cap E) \le c r^{\ga-1}, \quad x\in E,
\end{align*}
where $\mathcal H^s$ is the $s$-dimensional Hausdorff measure. Then $\nu$ is $\ga$-measure whenever it is absolutely continuous with respect to $(\ga-1)$-dimensional Hausdorff measure with bounded density.  In \cite{B-S-3} only density and gradient estimates were obtained. Computing  fractional derivatives of stable densities requires a little more work, which we do  in this paper following the arguments of \cite{ChZh16, C-Z3}. It was observed in \cite{W07} that, subject to further lower bounds on the $\nu$-measure of balls, sharp two-sided estimates can be obtained for stable densities. Since we are only concerned with upper bounds, we do not need to impose these additional assumptions. For aspects of fractal measures, we refer to  \cite{Falconer}.
\end{example}

\begin{example}
Any linear combination of $\nu$ in the previous three examples is again a $\ga$-measure with an appropriate $\ga$.
\end{example}

We gather estimates about stable densities which are useful for our purpose.

\begin{lemma}\label{l:heatkernel}
Let $p(x)$ be the density of a multivariate strictly $\al$-stable law with characteristic exponent \eqref{e:fourierpi} for all $\al\in(0,2)$ and $\int_{\mathbb{S}^{d-1}}\theta\nu(d\theta)=0$ when $\a=1$. Suppose that the L\'evy measure $\mu$  is a $\ga$-measure with $\ga\in [1,d]$ satisfying $\ga>d-\al$. Suppose also that $\nu$ is symmetric i.e. $\nu(B)=\nu(-B)$ for any $B\subset \S$.  Then, there exists a finite constant $C=C_{\al,d,\nu}>0$ such that for all $x,y\in\RR^d$
\begin{align}\label{est:p}
\vert p(x)\vert  \leq  \frac{C}{(1+\vert x\vert )^{\alpha+\gamma}},
\end{align}
\begin{align}\label{fg4}
\vert \nabla p(x)\vert \leq  \frac{C}{(1+\vert x\vert )^{\alpha+\gamma}},
\end{align}
\begin{align}\label{tg4}
\|  \nabla^{2} p(x)\|  _\mathrm{op}\leq  \frac{C}{(1+\vert x\vert )^{\alpha+\gamma}},
\end{align}
and
\begin{align}\label{eg4}
\vert p(x)-p(y)\vert \leq C(\vert x-y\vert \wedge1)\Big(\frac{1}{(1+\vert x\vert )^{\alpha+\gamma}}+\frac{1}{(1+\vert y\vert )^{\alpha+\gamma}}\Big),
\end{align}
where $\nabla^{2}$ is the Hessian and $\|  \cdot\|  _\mathrm{op}$ denotes the operator norm.
\end{lemma}

\begin{remark}\label{r:heatkernel}
\begin{enumerate}
\item[a)] In the setting of Example \ref{ex:abs},  Chen and Zhang \cite{C-Z3} obtained these estimates with $\ga=d$ without the symmetry assumption on the spectral measure, extending their earlier work \cite{ChZh16}. It is an open problem to obtain these general estimates in the setting of \cite{B-S-3} without the  symmetry condition.
\item[b)] Regularity estimates of solutions to Stein's equation and Wasserstein bounds in the sequel rely on this Lemma. In view of Item a), all the upcoming results extend to non-symmetric $\nu$ in the setting of Example \ref{ex:abs}.
\item[c)] Equations \eqref{est:p}-\eqref{tg4} were proved in \cite[Lemma 2.4]{B-S-3}.  Equation \eqref{eg4} follows from \eqref{fg4} by the mean value theorem.
\item[d)] In the above Lemma, if $\nu(d\theta)=\frac{1}{V(\mathbb{S}^{d-1})}$ (the rotationally invariant $\alpha$-stable L$\acute{e}$vy process), where $V(\mathbb{S}^{d-1})$ is the surface area of $\mathbb{S}^{d-1}$ and $V(\mathbb{S}^{d-1})=\frac{2\pi^{d/2}}{\Gamma(d/2)}$, then we can obtain that
    \begin{align*}
    p(x)\leq \frac{\max\Big\{2^{-d+1}\pi^{-\frac{d}{2}}\frac{\Gamma(d/\alpha)}{\alpha\Gamma(d/2)},\frac{\alpha2^{\alpha-1}\sin\frac{\alpha\pi}{2}\Gamma((d+\alpha)/2)
\Gamma(\alpha/2)}{\pi^{d/2+1}}\Big\}}{(1+\vert x\vert )^{\alpha+d}},
    \end{align*}
that is, the dependence of the constant $C$ on the dimension in \eqref{est:p} is exponential and similar conclusions can be made about the gradient \eqref{fg4} and Hessian matrix \eqref{tg4}. The specific details will be given in Lemma \ref{abounds} in Appendix \ref{amoment}. For the general case, the dependence of the constants on the dimension may need to analysis some infinite series, which is beyond the scope of this paper, we omit here.
\end{enumerate}
\end{remark}

\subsection{Main results}

\begin{theorem}[Regularity estimates for the solution]\label{t:reg}
 Let $f$ be given by  \eqref{solution} for $h\in\mathrm{Lip}_1$ in the case $\al>1$ and $h\in\mathcal H_\beta$ for some $\beta<\alpha$ in the case $\al\le 1$. Suppose that the assumption of Lemma \ref{l:heatkernel} holds.
\begin{itemize}
\item[i)] If $\al\in(1,2)$. Then there exists a finite constant $C>0$ depending on $\al,d,\nu$,
\begin{equation}\label{first order}
\|  \nabla f\|  _\infty \leq \alpha,
\end{equation}
\begin{equation}\label{second order}
\sup_{x\in\RR^d}\|  \nabla^{2}f(x)\|  _\mathrm{op} \leq C,
\end{equation}
where $\|  f\|  _\infty = \sup_{x\in\RR^d} \vert f(x)\vert $ is the $L^{\infty}$ norm. Further,
for all $x, y\in\mathbb{R}^{d}$
\begin{equation}\label{e:fracder}
\Big\vert \mathcal{L}^{\alpha,\nu} f(x)-\mathcal{L}^{\alpha,\nu}f(y)\Big\vert \leq\frac{2d_{\alpha}C }{\alpha(2-\alpha)(\alpha-1)}\vert x-y\vert ^{2-\alpha}.
\end{equation}

\item[ii)] If $\al=1$, then there exists finite $C>0$ depending on $\beta,d,\nu$ such that
\begin{align}\label{L9}
\|  \nabla f\|  _{\infty}\leq 1,
\end{align}
\begin{align}\label{L7}
\|  \mathcal{L}^{1,\nu}f\|  _{\infty}\leq C
\end{align}
and for any $x,y\in\RR^d$ with $\vert x-y\vert <1$ and $w\in\RR^d$,
\begin{align}\label{L3}
\vert \nabla f(x)-\nabla f(y)\vert \leq C \big(1-\log\vert x-y\vert \big)\vert x-y\vert .
\end{align}
\begin{align}\label{L4}
\vert f(x+w)-f(x)\vert \leq C \vert w\vert \wedge\vert w\vert ^{\beta},
\end{align}
\begin{align}\label{decay1}
\vert \langle x,\nabla f(x)\rangle\vert \leq C (1+\vert x\vert ^{\beta}).
\end{align}
\begin{align}\label{L8}
\vert \mathcal{L}^{1,\nu}f(x)-\mathcal{L}^{1,\nu}f(y)\vert \leq C \vert x-y\vert (1-\log\vert x-y\vert ).
\end{align}

\item[iii)] If $\al\in(0,1)$, then there exists finite $C>0$ depending on $\al,\beta,d,\nu$ such that
\begin{align}\label{firstb}
\|  \nabla f\|  _{\infty}\leq\alpha,
\end{align}
\begin{align}\label{L5}
\|  \mathcal{L}^{\alpha,\nu}f\|  _{\infty}\leq C,
\end{align}
\begin{align}\label{L1}
\vert \nabla f(x)-\nabla f(y)\vert  \leq C \vert x-y\vert ^\al
\end{align}
and for any $x,y\in\RR^d$, one has
\begin{align}\label{L2}
\vert f(x+y)-f(x)\vert \leq C\vert y\vert \wedge\vert y\vert ^{\beta},
\end{align}
\begin{align}\label{decay}
\vert \langle x,\nabla f(x)\rangle\vert \leq C (1+\vert x\vert ^{\beta}).
\end{align}
Further, for any $\eta\in (0,1)$, there exists finite $C>0$ depending on $\al,\beta,d,\nu,\eta$ such that for any $x,y\in\RR^d$,
\begin{align}\label{L6}
\vert \mathcal{L}^{\alpha,\nu}f(x)-\mathcal{L}^{\alpha,\nu}f(y)\vert \leq C \vert x-y\vert ^{\eta}.
\end{align}
\end{itemize}
\end{theorem}

In addition, we denote $\mathcal{F}_{\beta}$ is the class of functions $h:\mathbb{R}^{d}\rightarrow(\mathbb{R},d_{\beta})$ such that $\vert \nabla h(x)\vert \leq\frac{1}{1+\vert x\vert ^{1-\beta}}$. Then, we have the following proposition.

\begin{proposition}\label{prop}
Let $\alpha\in(0,\frac{1}{2}]$ and $f$ be given by \eqref{solution} for $h\in\mathcal{H}_{\beta}\cap\mathcal{F}_{\beta}$ with $\beta\in(0,\alpha)$. Suppose that the assumption of Lemma \ref{l:heatkernel} holds and denote $\tilde{\beta}:=\max\{\beta,d-\gamma\}\in(0,\alpha)$. Then, there exists a finite constant $C>0$ depending on $\al,\beta,d,\nu,\gamma$ such that for any $x\in\mathbb{R}^{d}$,
\begin{align*}
\vert \nabla f(x)\vert \leq C\left(1\wedge\vert x\vert ^{\tilde{\beta}-1}\right).
\end{align*}
\end{proposition}

\medskip

The second result is concerned with Wasserstein(-type) bounds for limit theorems with multivariate stable limit.

Let $n\in\mathbb{N}$ and let $\zeta_{n,1}, \zeta_{n,2}, \cdots, \zeta_{n,n}$ be a sequence of independent random vectors satisfying $\mathbb{E}\vert \zeta_{n,i}\vert ^{\beta}<\infty$ for any $\beta\in(0,\alpha)$ and $i=1,\cdots,n$.

Set
\begin{equation*}
S_{n}=
\begin{cases}
\zeta_{n,1}-\mathbb{E}\zeta_{n,1}+\cdots+\zeta_{n,n}-\mathbb{E}\zeta_{n,n},\quad &\alpha\in(1,2),\\
\zeta_{n,1}-\mathbb{E}\zeta_{n,1}{\bf 1}_{(0,1]}(\vert \zeta_{n,1}\vert )+\cdots+\zeta_{n,n}-\mathbb{E}\zeta_{n,n}{\bf 1}_{(0,1]}(\vert \zeta_{n,n}\vert ), &\alpha=1,\\
\zeta_{n,1}+\zeta_{n,2}+\cdots+\zeta_{n,n},\quad &\alpha\in(0,1)
\end{cases}
\end{equation*}
and
\begin{equation*}
S_{n}(i)=S_{n}-\zeta_{n,i}, \qquad 1\leq i\leq n.
\end{equation*}
Denote $l_{n}=\frac{\alpha}{d_{\alpha}}n$ and set $\eta_{n,i}=l_{n}^{1/\alpha}\zeta_{n,i}$, then we have the following Theorem.

\begin{theorem}[Wassertein bounds]\label{t:bound}
Suppose that the assumption of Lemma \ref{l:heatkernel} holds. Let $n\in\mathbb{N}$ and $\zeta_{n,i}$, $\eta_{n,i},$ $i=1,\cdots,n$ are defined as above. Denote the density function of $\eta_{n,i}$ by $p_{\eta_{n,i}}(r)dr\nu(d\theta)$ and when $\alpha\in(0,1]$, further assume that $p_{\eta_{n,i}}(r)$ is non-increasing,
\begin{itemize}
\item[(1)] When $\alpha\in(1,2)$, there exists a finite constant $C>0$ depending on $\al,d,\nu$ such that
\begin{align*}
&d_{W}\big(\mathcal{L}(S_{n}),\pi\big)\\
\leq& C\sum_{i=1}^{n}\left\{n^{-\frac{2}{\alpha}}\mathbb{E}\vert \eta_{n,i}\vert ^{2-\alpha}+n^{-\frac{2}{\alpha}}\big(\mathbb{E}\vert \eta_{n,i}\vert \big)^{2}\right.\\
&\left.\qquad+n^{-\frac{2}{\alpha}}\int_{0}^{l_{n}^{\frac{1}{\alpha}}}r^{2}\big\vert \frac{\alpha}{r^{\alpha+1}}-p_{\eta_{n,i}}(r)\big\vert dr
+n^{-\frac{1}{\alpha}}\int_{l_{n}^{\frac{1}{\alpha}}}^{\infty}\big\vert \frac{\alpha}{r^{\alpha}}-rp_{\eta_{n,i}}(r)dr\big\vert \right\}.
\end{align*}
\item[(2)] When $\alpha=1$, there exists a finite constant $C>0$ depending on $\beta,d,\nu$ such that
\begin{align*}
&d_{W_{\beta}}\big(\mathcal{L}(S_{n}),\mu\big)\\
\leq&C\sum_{i=1}^{n}\Big\{n^{-2}\int_{0}^{l_{n}}r\big(1-\log(l_{n}^{-1}r)\big)p_{\eta_{n,i}}(r)dr+n^{-\beta}\int_{l_{n}}^{\infty}r^{\beta}\big\vert d[\frac{1}{r}-rp_{\eta_{n,i}}(r)]\big\vert \\
&\quad+n^{-1}\int_{l_{n}}^{\infty}p_{\eta_{n,i}}(r)dr+n^{-2}\int_{0}^{l_{n}}r^{2}\big(1-\log(l_{n}^{-1}r)\big)\vert \frac{1}{r^{2}}dr-p_{\eta_{n,i}}(r)dr\vert \Big\}.
\end{align*}
\item[(3)] When $\alpha\in(\frac{1}{2},1)$, there exists a finite constant $C>0$ depending on $\al,\beta,d,\nu$ such that
\begin{align*}
&d_{W_{\beta}}\big(\mathcal{L}(S_{n}),\mu\big)\\
\leq&C\sum_{i=1}^{n}\Big\{n^{-\frac{3\alpha+1}{2\alpha}}\int_{0}^{l_{n}^{\frac{1}{\alpha}}}r^{\frac{\alpha+1}{2}}p_{\eta_{n,i}}(r)dr+n^{-\frac{\beta}{\alpha}}\int_{l_{n}^{\frac{1}{\alpha}}}^{\infty}r^{\beta}\big\vert d[\frac{\alpha}{r^{\alpha}}-rp_{\eta_{n,i}}(r)]\big\vert \\
&\quad+n^{-1}\int_{l_{n}^{\frac{1}{\alpha}}}^{\infty}
p_{\eta_{n,i}}(r)dr+n^{-\frac{1+\alpha}{\alpha}}\int_{0}^{l_{n}^{\frac{1}{\alpha}}}r^{\alpha+1}\big\vert \frac{\alpha}{r^{\alpha+1}}-p_{\eta_{n,i}}(r)\big\vert dr+\mathcal{R}_{n,\alpha,i}\Big\},
\end{align*}
where
\begin{align*}
\mathcal{R}_{n,\alpha,i}=n^{-\frac{1}{\alpha}}\Big\vert \int_{\mathbb{S}^{d-1}}\theta\nu(d\theta)\Big\vert
\Big\vert \frac{\alpha}{1-\alpha}l_{n}^{\frac{1-\alpha}{\alpha}}-\int_{0}^{l_{n}^{\frac{1}{\alpha}}}rp_{\eta_{n,i}}(r)dr\Big\vert .
\end{align*}
\item[(4)] When $\alpha\in(0,\frac{1}{2}]$, there exists a finite constant $C>0$ depending on $\al,\beta,d,\nu$ such that
\begin{align*}
&\sup_{h\in\mathcal{H}_{\beta}\cap\mathcal{F}_{\beta}}\big\vert \mathbb{E}h(S_{n})-\mu(h)\big\vert \\
\leq&C\sum_{i=1}^{n}\Big\{n^{-\frac{3\alpha+1}{2\alpha}}\int_{0}^{l_{n}^{\frac{1}{\alpha}}}r^{\frac{\alpha+1}{2}}p_{\eta_{n,i}}(r)dr+n^{-\frac{\beta}{\alpha}}\int_{l_{n}^{\frac{1}{\alpha}}}^{\infty}t^{\beta}\big\vert d[\frac{\alpha}{t^{\alpha}}-tp_{\eta_{n,i}}(t)]\big\vert \\
&\quad+n^{-1}\int_{l_{n}^{\frac{1}{\alpha}}}^{\infty}
p_{\eta_{n,i}}(r)dr+n^{-\frac{1+\alpha}{\alpha}}\int_{0}^{l_{n}^{\frac{1}{\alpha}}}r^{\alpha+1}\big\vert \frac{\alpha}{r^{\alpha+1}}-p_{\eta_{n,i}}(r)\big\vert dr+\mathcal{R}_{n,\alpha,i}\Big\}.
\end{align*}
\end{itemize}
\end{theorem}

\begin{remark}
In order to use the Leave-one-out method to prove the above theorem, when $\alpha\in(0,\frac{1}{2}]$, we need better decaying property with respect to the solution of the Stein's equation (\ref{e:steinsequation}), hence we consider the case $h\in\mathcal{H}_{\beta}\cap\mathcal{F}_{\beta}$.
An example $\nu-$Paretian distribution for this theorem will be given in Section \ref{s:example}.
\end{remark}

\section{Proof of Theorem \ref{t:reg} and Proposition \ref{prop}}\label{s:reg}

\subsection{Proof of Theorem \ref{t:reg}}
Now we prove all the claims of Theorem \ref{t:reg}.

\begin{proof}[Proof of \eqref{first order}, \eqref{L9} and \eqref{firstb}]
For any $\al\in (0,2)$,  denote $s=(1-\e^{-t})$ and $z=y-\e^{-\frac{t}{\alpha}}x,$ it is easy to check
 $$
 \nabla_{x}p(s,z)=-\e^{-\frac{t}{\alpha}}\nabla_{z}p(s,z), \qquad \nabla_{y}p(s,z)=\nabla_{z}p(s,z).
 $$
Notice that $\|  \nabla h\|  _{\infty}\leq1,$ from which it is readily checked that one can differentiate under the integral sign in (\ref{solution}). Hence,
\begin{align}\label{e:nablaf}
 \nabla f(x)&=-\int_{0}^{\infty}\int_{\mathbb{R}^{d}}\nabla_{x}p(s,z)\big(h(y)-\mu(h)\big)\dif y\dif t\nonumber\\
 &=\int_{0}^{\infty}\int_{\mathbb{R}^{d}}\e^{-\frac{t}{\alpha}}\nabla_{z}p(s,z)\big(h(y)-\mu(h)\big)\dif y\dif t\\
 &=\int_{0}^{\infty}\int_{\mathbb{R}^{d}}\e^{-\frac{t}{\alpha}}\nabla_{y}p(s,z)\big(h(y)-\mu(h)\big)\dif y\dif t\nonumber\\
 &=-\int_{0}^{\infty}\int_{\mathbb{R}^{d}}\e^{-\frac{t}{\alpha}}p(s,z)\nabla h(y)\dif y\dif t\nonumber.
 \end{align}
Therefore,
 \begin{align*}
 \|  \nabla f\|  _\infty &\leq\|  \nabla h\|  _\infty \int_{0}^{\infty}\e^{-\frac{t}{\alpha}}\int_{\mathbb{R}^{d}}p(s,z)\dif y\dif t\\
 &=\|  \nabla h\|  _\infty \int_{0}^{\infty}\e^{-\frac{t}{\alpha}}\int_{\mathbb{R}^{d}}p(s,z)\dif z\dif t=\alpha\|  \nabla h\|  _\infty.
 \end{align*}
\end{proof}

\begin{proof}[\it Proof of \eqref{second order}]
From \eqref{e:nablaf} we see that
\begin{equation*}
 \|  \nabla^{2}f(x)\|  _\mathrm{op}\leq\int_{0}^{\infty}\int_{\mathbb{R}^{d}}\e^{-\frac{2t}{\alpha}}\vert \nabla_{z}p(s,z)\vert \vert \nabla h(y)\vert \dif y\dif t.
 \end{equation*}
 Thanks to the scaling property $p(s,z)=s^{-d/\alpha}p(s^{-1/\alpha}z)$, we have
 \begin{align*}
\|  \nabla^{2}f(x)\|  _\mathrm{op} &\leq\|  \nabla h\|  _\infty \int_{0}^{\infty}\e^{-\frac{2t}{\alpha}}\int_{\mathbb{R}^{d}}s^{-\frac{d+1}{\alpha}}\vert \nabla p(s^{-\frac{1}{\alpha}}z)\vert \dif y\dif t\\
 &=\|  \nabla h\|  _\infty\int_{0}^{\infty}s^{-1/\alpha}\e^{-\frac{2t}{\alpha}}\int_{\mathbb{R}^{d}}\vert \nabla p(u)\vert \dif u\dif t,
 \end{align*}
 where the equality is by taking $u=s^{-1/\alpha}z$. Then, by Lemma \ref{l:heatkernel} \eqref{fg4},  there exists a finite $C>0$ such that
  \begin{align*}
 \|  \nabla^{2}f(x)\|  _\mathrm{op}&\leq  C \|  \nabla h\|  _\infty  \int_{0}^{\infty}s^{-1/\alpha}\e^{-\frac{2t}{\alpha}}\dif t =C \|  \nabla h\|  _\infty  B\big(\frac{\alpha-1}{\alpha},\frac{2}{\alpha}\big),
 \end{align*}
 where $B(a,b)$ is the Beta function.
\end{proof}

\medskip
\begin{proof}[\it Proof of \eqref{e:fracder}]
Before proving \eqref{e:fracder}, we give another representation of the operator $\mathcal{L}^{\alpha,\nu}$.
Fix $\alpha\in(1,2).$ Let $f\in C^{2}(\mathbb{R}^{d})$ be such that $\|  \nabla^{2}f\|  _\infty +\|  \nabla f\|  _\infty<\infty.$ We have
\begin{align}\label{transform}
\mathcal{L}^{\alpha,\nu}f(x)=\frac{d_{\alpha}}{\alpha}\int_{\mathbb{S}^{d-1}}\int_{0}^{\infty}\frac{\langle\theta,\nabla f(x+u\theta)\rangle-\langle\theta,\nabla f(x)\rangle}{u^{\alpha}}du\nu(d\theta), \quad x\in\mathbb{R}^{d}.
\end{align}
Indeed, one can write
\begin{align*}
\mathcal{L}^{\alpha,\nu}f(x)&=d_{\alpha}\int_{\mathbb{S}^{d-1}}\int_{0}^{\infty}\int_{0}^{r}\frac{\langle\theta,\nabla f(x+u\theta)\rangle-\langle\theta,\nabla f(x)\rangle}{r^{1+\alpha}}dudr\nu(d\theta)\\
&=d_{\alpha}\int_{\mathbb{S}^{d-1}}\int_{0}^{\infty}\int_{u}^{\infty}\frac{\langle\theta,\nabla f(x+u\theta)\rangle-\langle\theta,\nabla f(x)\rangle}{r^{1+\alpha}}drdu\nu(d\theta)\\
&=\frac{d_{\alpha}}{\alpha}\int_{\mathbb{S}^{d-1}}\int_{0}^{\infty}\frac{\langle\theta,\nabla f(x+u\theta)\rangle-\langle\theta,\nabla f(x)\rangle}{u^{\alpha}}du\nu(d\theta),
\end{align*}
implying \eqref{transform}.  Using (\ref{transform}), we can write
\begin{align*}
&\frac{\alpha}{d_{\alpha}}\Big\vert \mathcal{L}^{\alpha,\nu}f(x)-\mathcal{L}^{\alpha,\nu}f(y)\Big\vert \\
=&\Big\vert\int_{\mathbb{S}^{d-1}}\int_{0}^{\infty}\frac{\langle\theta,\nabla f(x+u\theta)\rangle-\langle\theta,\nabla f(x)\rangle-\langle\theta,\nabla f(y+u\theta)\rangle+\langle\theta,\nabla f(y)\rangle}{u^{\alpha}}du\nu(d\theta)\Big\vert \\
\leq&\int_{\mathbb{S}^{d-1}}\int_{0}^{\infty}\frac{\left\vert \left\langle\theta,\nabla f(x+u\theta)-\nabla f(x)-\nabla f(y+u\theta)+\nabla f(y)\right\rangle\right\vert }{u^{\alpha}}du\nu(d\theta)\\
\leq&2\|  \nabla^2 f\|   \vert x-y\vert \int_{\mathbb{S}^{d-1}}\int_{\vert x-y\vert }^{\infty}\frac{1}{u^{\alpha}}du\nu(d\theta)
+2\|  \nabla^2 f\|   \int_{\mathbb{S}^{d-1}}\int_{0}^{\vert x-y\vert }\frac{1}{u^{\alpha-1}}du\nu(d\theta)\\
=&\frac{2\|  \nabla^2 f\|  _{\infty}}{(2-\alpha)(\alpha-1)}\vert x-y\vert^{2-\alpha},
\end{align*}
ending the proof.
\end{proof}

\begin{proof}[\it Proof of \eqref{L3} and \eqref{L1}]
Let $\al\in (0,1]$. Differentiating under the integral, we have
\begin{align}\label{e:f'}
\nabla f(x)=-\int_{0}^{\infty}\int_{\mathbb{R}^{d}}e^{-t/\alpha}p(u)\nabla h\big(s(t)^{1/\alpha}u+e^{-t/\alpha}x\big)dudt.
\end{align}
Choose $B=\vert x-y\vert ^{\alpha}.$ Applying successively (\ref{e:f'}), a change of variables, and Lemma \ref{l:heatkernel}, we get that
\begin{align*}
&\vert \nabla f(x)-\nabla f(y)\vert \\
\leq&\int_{0}^{\infty}\!\!\!\!e^{-t/\alpha}\!\!\!\int_{\mathbb{R}^{d}}\!\vert p\big(u-s(t)^{-1/\alpha}e^{-t/\alpha}x\big)-p\big(u-s(t)^{-1/\alpha}e^{-t/\alpha}y\big)\vert \vert \nabla h\big(us(t)^{1/\alpha}\big)\vert dudt\\
\leq& C \|  \nabla h\|  _{\infty}\int_{0}^{\infty}e^{-t/\alpha}\big((s(t)^{-1/\alpha}e^{-t/\alpha}\vert x-y\vert )\wedge1\big)dt\\
\leq& C \|  \nabla h\|  _{\infty}\int_{0}^{\infty}e^{-t/\alpha}\big(t^{-1/\alpha}\vert x-y\vert )\wedge1\big)dt\\
\leq&C \Big(\int_{0}^{B}e^{-t/\alpha}dt+\int_{B}^{\infty}e^{-t/\alpha}t^{-1/\alpha}dt\vert x-y\vert \Big)
\leq C \big(B+\int_{B}^{\infty}t^{-1/\alpha}e^{-t/\alpha}dt\vert x-y\vert \big),
\end{align*}
where in the third inequality, we use the fact that $s(t)^{-1/\alpha}e^{-t/\alpha}=(e^{t}-1)^{-1/\alpha}\leq t^{-1/\alpha}.$ If $\alpha=1,$ then for $B=\vert x-y\vert <1,$
\begin{align*}
\vert \nabla f(x)-\nabla f(y)\vert &\leq C \Big(B+\big(\int_{B}^{1}t^{-1}dt+\int_{1}^{\infty}e^{-t}dt\big)\vert x-y\vert \Big)\\
&\leq C(1-\log\vert x-z\vert )\vert x-z\vert .
\end{align*}
If $\alpha\in(0,1),$ then for $B=\vert x-y\vert ^{\alpha}$
\begin{align*}
\vert \nabla f(x)-\nabla f(y)\vert \leq C \big(B+\int_{B}^{\infty}t^{-1/\alpha}dt \vert x-z\vert \big)\leq C \vert x-y\vert ^{\alpha}.
\end{align*}
\end{proof}

\begin{proof}[\it Proof of \eqref{L4} and \eqref{L2}]
For $\alpha\in(0,1],$ one has by (\ref{solution})
\begin{multline*}
 f(x+w)-f(x) \\ =-\int_0^\infty \int_{\mathbb{R}^{d}}p(z)(h(s(t)^{-1/\alpha}z+e^{-t/\alpha}(x+w))-h(s(t)^{-1/\alpha}z+e^{-t/\alpha}x))\,dz\,dt.
\end{multline*}
Thus, for $h\in\mathcal{H}_{\beta}$ with $\beta\in(0,\alpha),$
\begin{equation*}
  \vert f(x+w)-f(x)\vert \le \int_0^\infty \int_{\mathbb{R}^{d}}p(z)e^{-\beta t/\alpha}\,dz\,dt(\vert w\vert ^\beta\wedge\vert w\vert )=\frac{\alpha}{\beta}(\vert w\vert ^\beta\wedge\vert w\vert ).
\end{equation*}
\end{proof}

\begin{proof}[\it Proof of \eqref{L7} and \eqref{L5}]
We first prove \eqref{L5}. By \eqref{L2}, for $\alpha \in (0,1)$,
\begin{align*}
\vert \mathcal{L}^{\alpha,\nu}f(x)\vert \le& d_{\alpha}\int_{\mathbb{S}^{d-1}}\int_{0}^{\infty}\frac{\vert f(x+r\theta)-f(x)\vert }{r^{\alpha+1}}dr\nu(d\theta)\\
\leq&\frac{\alpha d_{\alpha}}{\beta}\int_{\mathbb{S}^{d-1}}\int_{0}^{\infty}\frac{r^{\beta}\wedge r}{r^{\alpha+1}}dr\nu(d\theta)\leq C.
\end{align*}
Now it remains to bound $\|  \mathcal{L}^{1,\nu}f\|  _{\infty}.$ By \eqref{L3}, for $\vert w\vert \leq1,$ one has
\begin{align*}
\vert f(x+w)-f(x)-\langle\nabla f(x),w\rangle\vert &=\left\vert \int_{0}^{1}\left\langle\nabla f(x+uw)-\nabla f(x),w\right\rangle du\right\vert \\
&\le \vert w\vert \int_0^{1} \vert \nabla f(x+uw)-\nabla f(x)\vert  du \\
&\leq C\vert w\vert ^{2}\int_{0}^{1}u(1+\log\frac{1}{u\vert w\vert })du\\
&\leq C\vert w\vert ^2(1+\log\frac{1}{\vert w\vert }).
\end{align*}
It follows from (\ref{L4}) that
\begin{align*}
\vert \mathcal{L}^{1,\nu}f(x)\vert \leq C\Big(\int_{\mathbb{S}^{d-1}}\int_{0}^{1}(1+\log\frac{1}{r})dr\nu(d\theta)+\int_{\mathbb{S}^{d-1}}\int_{1}^{\infty}r^{\beta-2}dr\nu(d\theta)\Big)\leq C.
\end{align*}
\end{proof}

\begin{proof}[\it Proof of \eqref{decay1} and \eqref{decay}]
For $\alpha\in(0,1],$ one has  by  \eqref{e:steinsequation}  that
\begin{align*}
\vert \langle x,\nabla f(x)\vert =\alpha\big\vert \mathcal{L}^{\alpha,\nu}f(x)-[h(x)-h(0)]+[\pi(h)-h(0)]\big\vert .
\end{align*}
Thus, by (\ref{L7}) and (\ref{L5}), we have
\begin{align*}
\vert \langle x,\nabla f(x)\rangle\vert \leq C\big[1+\vert x\vert \wedge\vert x\vert ^{\beta}\big]\leq C(1+\vert x\vert ^{\beta}).
\end{align*}
The proof is complete.
\end{proof}

It remains to prove \eqref{L8} and \eqref{L6}. We need two lemmas.

\bl\label{lem3}
Let $\alpha\in(0,1]$ and $h\in\mathcal{H}_{\beta}$ with $\beta\in(0,\alpha).$
\begin{itemize}
\item[1)] If $\alpha=1$, then for any $a>0$,
$$\left\vert \int_{\mathbb{R}^{d}}\mathcal{L}^{1,\nu}p(y)h(ay)\,dy\right\vert \le C(a^{\beta}+a).$$
\item[2)] If $\alpha\in(0,1),$ then for any $a>0,$
$$\left\vert \int_{\mathbb{R}^{d}}\mathcal{L}^{\alpha,\nu}p(y)h(ay)\,dy\right\vert \le Ca^{\alpha}.$$
\end{itemize}
\el
\begin{proof}
1) Notice that for any $y\in\mathbb{R}^{d}$,
\begin{align*}
\mathcal{L}^{1,\nu}p(y)=\int_{\mathbb{S}^{d-1}}\int_{1}^{\infty}&\frac{p(y+r\theta)-p(y)}{r^{2}}dr\nu(d\theta)\\
&+\int_{\mathbb{S}^{d-1}}\int_{0}^{1}\frac{p(y+r\theta)-p(y)-\langle r\theta,\nabla p(y)\rangle}{r^{2}}dr\nu(d\theta).
\end{align*}
By Fubuni's theorem,
\begin{align*}
&\Big\vert \int_{\mathbb{R}^{d}}\int_{\mathbb{S}^{d-1}}\int_{1}^{\infty}\frac{p(y+r\theta)-p(y)}{r^{2}}dr\nu(d\theta)h(ay)dy\Big\vert \\
=&\Big\vert \int_{\mathbb{R}^{d}}\int_{\mathbb{S}^{d-1}}\int_{1}^{\infty}\frac{p(y)}{r^{2}}\big(h(ay-ar\theta)-h(ay)\big)dr\nu(d\theta)dy\Big\vert \\
\leq&a^{\beta}\int_{\mathbb{S}^{d-1}}\int_{1}^{\infty}\int_{\mathbb{R}^{d}}\frac{p(y)r^{\beta}}{r^{2}}dydr\nu(d\theta)\leq Ca^{\beta}.
\end{align*}
Applying Fubini's theorem, integration by parts and the estimate of $\nabla p(x)$ (Lemma \ref{l:heatkernel}), we get
\begin{align*}
&\Big\vert \int_{\mathbb{R}^{d}}\int_{\mathbb{S}^{d-1}}\int_{0}^{1}\frac{p(y+r\theta)-p(y)-\langle r\theta,\nabla p(y)\rangle}{r^{2}}dr\nu(d\theta)h(ay)dy\Big\vert \\
=&\Big\vert \int_{\mathbb{R}^{d}}\int_{\mathbb{S}^{d-1}}\int_{0}^{1}\int_{0}^{1}\frac{\langle\nabla p(y+ur\theta)-\nabla p(y), r\theta\rangle}{r^{2}}dudr\nu(d\theta)h(ay)dy\Big\vert \\
\leq&a\|  \nabla h\|  _{\infty}\Big\vert \int_{\mathbb{R}^{d}}\int_{\mathbb{S}^{d-1}}\int_{0}^{1}\int_{0}^{1}\frac{\vert p(y+ur\theta)-p(y)\vert }{r}dudr\nu(d\theta)dy\Big\vert \\
\leq&Ca\|  \nabla h\|  _{\infty}\int_{\mathbb{S}^{d-1}}\int_{0}^{1}\int_{0}^{1}\frac{ur}{r}dudr\nu(d\theta)\leq Ca.
\end{align*}
2) We have by Fubini's theorem that, for any $a>0,$
\begin{align*}
&\left\vert \int_{\mathbb{R}^{d}}(\mathcal{L}^{\alpha,\nu}p)(y)h(ay)\,dy\right\vert\\ 
=&d_{\alpha}\Big\vert \int_{\mathbb{R}^{d}}\int_{\mathbb{S}^{d-1}}
\int_{0}^{\infty}\frac{p(y+r\theta)-p(y)}{r^{\alpha+1}}dr\nu(d\theta)h(ay)dy\Big\vert \\
=&d_{\alpha}\Big\vert \int_{\mathbb{R}^{d}}\int_{\mathbb{S}^{d-1}}
\int_{0}^{\infty}\frac{p(y)}{r^{\alpha+1}}\big(h(ay-ar\theta)-h(ay)\big)dr\nu(d\theta)dy\Big\vert \\
=&d_{\alpha}a^{\alpha}\Big\vert \int_{\mathbb{R}^{d}}\int_{\mathbb{S}^{d-1}}
\int_{0}^{\infty}\frac{p(y)}{u^{\alpha+1}}\big(h(ay-u\theta)-h(ay)\big)du\nu(d\theta)dy\Big\vert \\
\leq& d_{\alpha}a^{\alpha}\int_{\mathbb{S}^{d-1}}
\int_{0}^{\infty}\int_{\mathbb{R}^{d}}\frac{p(y)(u\wedge u^{\beta})}{u^{\alpha+1}}dydu\nu(d\theta)\leq Ca^{\alpha}.
\end{align*}
Thus, the assertion is proved.
\end{proof}

\bl\label{lem:tria-p}
Let $\alpha\in(0,1].$ Then
\begin{align*}
\int_{\mathbb{R}^{d}}\vert \mathcal{L}^{\alpha,\nu}p(z)\vert dz\leq C.
\end{align*}
\el
\begin{proof}
If $\alpha=1,$ then by Lemma \ref{l:heatkernel}, we get that, for any $\vert u\vert \leq1,$
\begin{align*}
\vert \nabla^{2}p(z+u)\vert \leq\frac{C_{d}}{(1+\vert z+u\vert )^{\gamma+1}}\leq\frac{C}{(1+\vert z\vert )^{\gamma+1}},
\end{align*}
where in the last inequality, we use the fact that $2(1+\vert z+u\vert )\geq2+\vert z\vert -\vert u\vert \geq1+\vert z\vert .$ It follows that, for any $\vert w\vert \leq1,$
\begin{align*}
\vert p(z+w)-p(z)-w\cdot\nabla p(z)\vert \leq\frac{C}{(1+\vert z\vert )^{\gamma+1}}\vert w\vert ^{2}.
\end{align*}
Thus, we have that
\begin{align*}
\int_{\mathbb{R}^{d}}\vert \mathcal{L}^{1,\nu}p(z)\vert dz&\leq d_{1}\int_{\mathbb{R}^{d}}\int_{\mathbb{S}^{d-1}}\int_{1}^{\infty}\frac{p(z+r\theta)+p(z)}{r^{2}}dr\nu(d\theta)dz\\
&\quad+d_{1}\int_{\mathbb{R}^{d}}\int_{\mathbb{S}^{d-1}}\int_{0}^{1}\frac{\vert p(z+r\theta)-p(z)-\langle r\theta,\nabla p(z)\rangle\vert }{r^{2}}dr\nu(d\theta)dz\\
&\leq2d_{1}+d_{1}\int_{\mathbb{R}^{d}}\int_{\mathbb{S}^{d-1}}\int_{0}^{1}\frac{C}{(1+\vert z\vert )^{\gamma+1}}dr\nu(d\theta)dz\leq C.
\end{align*}
If $\alpha\in(0,1),$ then by Lemma \ref{l:heatkernel},
\begin{align*}
\int_{\mathbb{R}^{d}}\vert \mathcal{L}^{\alpha,\nu}p(z)\vert dz&\leq d_{\alpha}\int_{\mathbb{R}^{d}}\int_{\mathbb{S}^{d-1}}\int_{0}^{\infty}\frac{\vert p(z+r\theta)-p(z)\vert }{r^{\alpha+1}}dr\nu(d\theta)dz\\
&\leq C\int_{\mathbb{S}^{d-1}}\int_{0}^{\infty}\frac{r\wedge1}{r^{\alpha+1}}dr\nu(d\theta)\leq C.
\end{align*}
\end{proof}

We are ready to complete the proof of Theorem \ref{t:reg}.

\begin{proof}[\it Proof of \eqref{L8} and \eqref{L6}]
Set $s(t)=1-e^{-t}$ and $\tilde{h}=h-\mathbb{E}[h(Z)].$ We claim that
\begin{eqnarray}\label{1.2}
  &&\mathcal{L}^{\alpha,\nu}f(x)=-\int_0^\infty \int_{\mathbb{R}^{d}} \mathcal{L}^{\alpha,\nu}q(t,\cdot,y)(x)\widetilde h(y)\,dy\,dt\nonumber\\
  &=&-\int_0^\infty s(t)^{-1}e^{-t}\,dt\int_{\mathbb{R}^{d}}(\mathcal{L}^{\alpha,\nu}p)(z)\widetilde h\left(s(t)^{1/\alpha}z+e^{-t/\alpha}x\right)\,dz.
\end{eqnarray}
The second equality follows from (\ref{density}). To see that the first one holds, note that $h\in\mathcal{H}_{\beta}$ so that Fubini's theorem implies
\begin{align*}
\mathcal{L}^{\alpha,\nu} f(x) = -\int_0^\infty \mathcal L^{\alpha,\nu} \left(\int_{\mathbb{R}^{d}} q(t,\cdot,y) \widetilde h(y) dy\right) (x) dt.
\end{align*}
For each fixed $t>0,$ applying Lemma \ref{l:heatkernel} justifies a further use of Fubini's theorem, we are led to
\begin{align*}
\mathcal L^{\alpha,\nu} \left(\int_{\mathbb{R}^{d}} q(t,\cdot,y) \widetilde h(y) dy\right) (x) = \int_{\mathbb{R}^{d}} \mathcal L^{\alpha,\nu} q(t,\cdot,y)(x)\widetilde h(y) dy,
\end{align*}
and the claim follows. Now let $\alpha=1$ and let $x,y\in\RR^d$ be such that $\vert x-y\vert \le 1$. By Lemma \ref{lem:tria-p}, we get that
\begin{align*}
  &\left\vert \int_{\mathbb{R}^{d}}(\mathcal{L}^{1,\nu}p)(z)(h(s(t)^{1/\alpha}z+e^{-t/\alpha}x)-h(s(t)^{1/\alpha}z+e^{-t/\alpha}y))\,dz\right\vert \\
&\le e^{-t}\vert x-y\vert \int_{\mathbb{R}^{d}}\left\vert (\mathcal{L}^{1,\nu}p)(z)\right\vert \,dz.
\end{align*}
By Lemma \ref{lem3}, we get that, for $t<1$,
\begin{align}\label{eq:p2}
 &\left\vert \int_{\mathbb{R}^{d}}(\mathcal{L}^{1,\nu}p)(z)h\left(s(t)^{1/\alpha}z+e^{-t/\alpha}x\right)\,dz\!-\!\int_{\mathbb{R}^{d}}(\mathcal{L}^{1,\nu}p)(z)h\left(s(t)^{1/\alpha}z+e^{-t/\alpha}y\right)\,dz\right\vert \nonumber\\
&\le C(s(t)+s(t)^{\beta})\le Cs(t)^{\beta}.
\end{align}
Let $B=\vert x-y\vert ^{1/\beta}$. Then by \eqref{1.2}, we get that
\begin{align*}
\vert \mathcal{L}^{1,\nu}f(x)-\mathcal{L}^{1,\nu}f(y)\vert &\le C\Big(\int_0^B s(t)^{-1}s(t)^{\beta}e^{-t}\,dt+\int_B^\infty s(t)^{-1}e^{-2t}\,dt \vert x-y\vert \Big)\\
&\le C\Big(B^{\beta}+\int_B^\infty t^{-1}e^{-t}\,dt \vert x-y\vert \Big)\\
&=C\Big(B^{\beta}+\left(\int_B^1 t^{-1}\,dt+\int_1^\infty e^{-t}\,dt\right) \vert x-y\vert \Big)\\
&\leq C\vert x-y\vert \left(1-\log\vert x-y\vert \right),
\end{align*}
If $\al\in (0,1)$, by Lemma \ref{lem:tria-p}, we get that
\begin{align*}
  &\quad\left\vert \int_{\mathbb{R}^{d}}(\mathcal{L}^{\alpha,\nu}p)(z)(\widetilde h(s(t)^{1/\alpha}z+e^{-t/\alpha}x)-\widetilde h(s(t)^{1/\alpha}z+e^{-t/\alpha}y))\,dz\right\vert \\
&\le e^{-t/\alpha}\vert x-y\vert \int_{\mathbb{R}^{d}}\left\vert (\mathcal{L}^{\alpha,\nu}p)(z)\right\vert \,dz
   \le C\vert x-y\vert .
\end{align*}
By Lemma \ref{lem3}  applied to $\widetilde h(\cdot+e^{-t/\alpha}x), \widetilde h(\cdot+e^{-t/\alpha}y)\in \mathcal H_\beta$, we get that,
\begin{align*}
&\left\vert \int_{\mathbb{R}^{d}}(\mathcal{L}^{\alpha,\nu}p)(z)\widetilde h\left((s(t)^{1/\alpha}z+e^{-t/\alpha}x\right)\!dz\!-\!\!\int_{\mathbb{R}^{d}}(\mathcal{L}^{\alpha,\nu}p)(z)\widetilde h\left((s(t)^{1/\alpha}z+e^{-t/\alpha}y\right)\!dz\right\vert\\ 
&\le Cs(t).
\end{align*}
Thus, we get that, for any $\eta\in[0,1]$,
\begin{align}\label{1.1}
  &\left\vert \int_{\mathbb{R}^{d}}(\mathcal{L}^{\alpha,\nu}p)(z)\widetilde h\left(s(t)^{1/\alpha}z+e^{-t/\alpha}x\right)\!dz\!-\!\int_{\mathbb{R}^{d}}(\mathcal{L}^{\alpha,\nu}p)(z)\widetilde h\left(s(t)^{1/\alpha}z+e^{-t/\alpha}y\right)\!dz\right\vert \nonumber\\
  &\le C\big(s(t)\wedge\vert x-y\vert \big)\le Cs(t)(1\wedge s(t)^{-1}\vert x-y\vert )^{\eta}\leq Cs(t)^{1-\eta}\vert x-y\vert ^{\eta}.
\end{align}
Then, by \eqref{1.2} and \eqref{1.1}, we get that
\begin{align*}
 \quad\vert \mathcal{L}^{\alpha,\nu}f(x)-\mathcal{L}^{\alpha,\nu}f(y)\vert &\le C\int_0^\infty s(t)^{-1}s(t)^{1-\eta}e^{-t}\,dt \vert x-y\vert ^{\eta}\\
  &\le C\int_0^\infty s(t)^{-\eta}e^{-t}\,dt \vert x-y\vert ^{\eta}\\
  &\le C\int_0^\infty t^{-\eta}e^{-(1-\eta)t}\,dt \vert x-y\vert ^{\eta}\leq C\vert x-y\vert ^{\eta},
\end{align*}
completing the proof.
\end{proof}

\subsection{Proof of Proposition \ref{prop}}
When $\vert x\vert <1$, the conclusion is obvious.

When $\vert x\vert \geq1$, notice that
\begin{align*}
\nabla f(x)=-\int_{0}^{\infty}\int_{\mathbb{R}^{d}}e^{-t/\alpha}p(y)\nabla h\big(s(t)^{1/\alpha}y+e^{-t/\alpha}x\big)dydt,
\end{align*}
as $\vert y\vert \geq2s(t)^{-1/\alpha}e^{-t/\alpha}\vert x\vert $, that is, $s(t)^{1/\alpha}\vert y\vert \geq2e^{-t/\alpha}\vert x\vert $, we have
\begin{align*}
&e^{\frac{\beta-1}{\alpha}t}\vert x\vert ^{1-\beta}\vert \nabla h\big(s(t)^{1/\alpha}y+e^{-t/\alpha}x\big)\vert \\
\leq&\frac{e^{\frac{\beta-1}{\alpha}t}\vert x\vert ^{1-\beta}}{1+\left\vert s(t)^{1/\alpha}y+e^{-t/\alpha}x\right\vert ^{1-\beta}}\\
\leq&\frac{e^{\frac{\beta-1}{\alpha}t}\vert x\vert ^{1-\beta}}{1+\left(s(t)^{1/\alpha}\vert y\vert -e^{-t/\alpha}\vert x\vert \right)^{1-\beta}}\leq\frac{e^{\frac{\beta-1}{\alpha}t}\vert x\vert ^{1-\beta}}{1+e^{\frac{\beta-1}{\alpha}t}\vert x\vert ^{1-\beta}}\leq1,
\end{align*}
which implies
\begin{align*}
\vert \nabla h\big(s(t)^{1/\alpha}y+e^{-t/\alpha}x\big)\vert \leq e^{\frac{1-\beta}{\alpha}t}\vert x\vert ^{\beta-1}.
\end{align*}
These imply
\begin{align*}
&\left\vert \int_{0}^{\infty}\int_{\vert y\vert \geq2s(t)^{-1/\alpha}e^{-t/\alpha}\vert x\vert }e^{-\frac{t}{\alpha}}p(y)\nabla h\big(s(t)^{1/\alpha}y+e^{-t/\alpha}x\big)dydt\right\vert \\
\leq&\int_{0}^{\infty}\int_{\vert y\vert \geq2s(t)^{-1/\alpha}e^{-t/\alpha}\vert x\vert }e^{-\frac{t}{\alpha}}p(y)e^{\frac{1-\beta}{\alpha}t}\vert x\vert ^{\beta-1}dydt\\
\leq&\vert x\vert ^{\beta-1}\int_{0}^{\infty}\int_{\mathbb{R}^{d}}e^{-\frac{\beta}{\alpha}t}p(y)dydt=\frac{\alpha}{\beta}\vert x\vert ^{\beta-1}.
\end{align*}
As $\vert y\vert \leq\frac{1}{2}s(t)^{-1/\alpha}e^{-t/\alpha}\vert x\vert $, that is, $s(t)^{1/\alpha}\vert y\vert \leq\frac{1}{2}e^{-t/\alpha}\vert x\vert $, we have
\begin{align*}
e^{\frac{\beta-1}{\alpha}t}\vert x\vert ^{1-\beta}\vert \nabla h\big(s(t)^{1/\alpha}y+e^{-t/\alpha}x\big)\vert \leq&\frac{e^{\frac{\beta-1}{\alpha}t}\vert x\vert ^{1-\beta}}{1+\left(e^{-t/\alpha}\vert x\vert -s(t)^{1/\alpha}\vert y\vert \right)^{1-\beta}}\\
\leq&\frac{e^{\frac{\beta-1}{\alpha}t}\vert x\vert ^{1-\beta}}{1+2^{\beta-1}e^{\frac{\beta-1}{\alpha}t}\vert x\vert ^{1-\beta}}\leq2,
\end{align*}
which implies
\begin{align*}
\vert \nabla h\big(s(t)^{1/\alpha}y+e^{-t/\alpha}x\big)\vert \leq2e^{\frac{1-\beta}{\alpha}t}\vert x\vert ^{\beta-1}.
\end{align*}
These imply
\begin{align*}
\left\vert \int_{0}^{\infty}\int_{\vert y\vert \leq\frac{1}{2}s(t)^{-1/\alpha}e^{-t/\alpha}\vert x\vert }e^{-\frac{t}{\alpha}}p(y)\nabla h\big(s(t)^{1/\alpha}y+e^{-t/\alpha}x\big)dydt\right\vert
\leq\frac{2\alpha}{\beta}\vert x\vert ^{\beta-1}.
\end{align*}
As $\frac{1}{2}s(t)^{-1/\alpha}e^{-t/\alpha}\vert x\vert \leq\vert y\vert \leq2s(t)^{-1/\alpha}e^{-t/\alpha}\vert x\vert $, (\ref{est:p}) implies
\begin{align*}
&\left\vert \int_{0}^{\infty}\!\!\!\int_{\frac{1}{2}s(t)^{-1/\alpha}e^{-t/\alpha}\vert x\vert \leq\vert y\vert \leq2s(t)^{-1/\alpha}e^{-t/\alpha}\vert x\vert }\!\!e^{-\frac{t}{\alpha}}p(y)\nabla h\big(s(t)^{1/\alpha}y+e^{-t/\alpha}x\big)dydt\right\vert \\
&\leq C\left[\int_{0}^{\ln(1+\vert x\vert ^{\alpha})}\!\!\!e^{-\frac{t}{\alpha}}\!\int_{\frac{1}{2}e^{-t/\alpha}\vert x\vert \leq s(t)^{1/\alpha}\vert y\vert \leq2e^{-t/\alpha}\vert x\vert }\frac{\vert y\vert ^{-(\alpha+\gamma)}}{\left\vert s(t)^{1/\alpha}y+e^{-t/\alpha}x\right\vert ^{1-\beta}}dydt\right.\\
&\left.\qquad\qquad+\int_{\ln\left(1+\vert x\vert ^{\alpha}\right)}^{\infty}e^{-\frac{t}{\alpha}}\int_{\frac{1}{2}s(t)^{-1/\alpha}e^{-t/\alpha}\vert x\vert \leq\vert y\vert \leq2s(t)^{-1/\alpha}e^{-t/\alpha}\vert x\vert }dydt\right].
\end{align*}
For the first term, we have
\begin{align*}
&\int_{0}^{\ln\left(1+\vert x\vert ^{\alpha}\right)}e^{-\frac{t}{\alpha}}\int_{\frac{1}{2}e^{-t/\alpha}\vert x\vert \leq s(t)^{1/\alpha}\vert y\vert \leq2e^{-t/\alpha}\vert x\vert }\frac{\vert y\vert ^{-(\alpha+\gamma)}}{\left\vert s(t)^{1/\alpha}y+e^{-t/\alpha}x\right\vert ^{1-\beta}}dydt\\
\leq&\int_{0}^{\ln\left(1+\vert x\vert ^{\alpha}\right)}e^{-\frac{t}{\alpha}}\int_{\mathbb{S}^{d-1}}\int_{\frac{1}{2}s(t)^{-1/\alpha}e^{-t/\alpha}\vert x\vert }^{2s(t)^{-1/\alpha}e^{-t/\alpha}\vert x\vert }\frac{r^{-(\alpha+1+\gamma-d)}}{\left\vert s(t)^{1/\alpha}r\theta-e^{-t/\alpha}\vert x\vert \right\vert ^{1-\beta}}drd\theta dt\\
\leq&C\int_{0}^{\ln\left(1+\vert x\vert ^{\alpha}\right)}e^{t}s(t)^{\frac{\alpha+1}{\alpha}}\int_{\frac{1}{2}s(t)^{-1/\alpha}e^{-t/\alpha}\vert x\vert }^{s(t)^{-1/\alpha}e^{-t/\alpha}\vert x\vert }\frac{\vert x\vert ^{-(\alpha+\gamma+1-d)}}{\left(e^{-t/\alpha}\vert x\vert -s(t)^{1/\alpha}r\right)^{1-\beta}}drdt\\
&+C\int_{0}^{\ln\left(1+\vert x\vert ^{\alpha}\right)}e^{t}s(t)^{\frac{\alpha+1}{\alpha}}\int_{s(t)^{-1/\alpha}e^{-t/\alpha}\vert x\vert }^{2s(t)^{-1/\alpha}e^{-t/\alpha}\vert x\vert }\frac{\vert x\vert ^{-(\alpha+\gamma+1-d)}}{\left(s(t)^{1/\alpha}r-e^{-t/\alpha}\vert x\vert \right)^{1-\beta}}drdt.
\end{align*}
Then, one can write
\begin{align*}
&\int_{0}^{\ln\left(1+\vert x\vert ^{\alpha}\right)}e^{t}s(t)^{\frac{\alpha+1}{\alpha}}\int_{\frac{1}{2}s(t)^{-1/\alpha}e^{-t/\alpha}\vert x\vert }^{s(t)^{-1/\alpha}e^{-t/\alpha}\vert x\vert }\frac{\vert x\vert ^{-(\alpha+\gamma+1-d)}}{\left(e^{-t/\alpha}\vert x\vert -s(t)^{1/\alpha}r\right)^{1-\beta}}drdt\\
=&\frac{1}{\vert x\vert ^{\alpha+\gamma+1-d}}\int_{0}^{\ln\left(1+\vert x\vert ^{\alpha}\right)}e^{t}s(t)\int^{\frac{1}{2}e^{-t/\alpha}\vert x\vert }_{0}\frac{1}{r^{1-\beta}}drdt\\
\leq&\frac{C}{\vert x\vert ^{\alpha+\gamma+1-d-\beta}}\int_{0}^{\ln\left(1+\vert x\vert ^{\alpha}\right)}e^{\frac{\alpha-\beta}{\alpha}t}\dif t\\
\leq&\frac{C}{\vert x\vert ^{\alpha+\gamma+1-d-\beta}}e^{\frac{\alpha-\beta}{\alpha}\ln(1+\vert x\vert ^{\alpha})}\leq\frac{C}{\vert x\vert ^{\gamma+1-d}},
\end{align*}
whereas
\begin{align*}
&\int_{0}^{\ln\left(1+\vert x\vert ^{\alpha}\right)}e^{t}s(t)^{\frac{\alpha+1}{\alpha}}\int_{s(t)^{-1/\alpha}e^{-t/\alpha}\vert x\vert }^{2s(t)^{-1/\alpha}e^{-t/\alpha}\vert x\vert }\frac{\vert x\vert ^{-(\alpha+\gamma+1-d)}}{\left(s(t)^{1/\alpha}r-e^{-t/\alpha}\vert x\vert \right)^{1-\beta}}drdt\\
\leq&\frac{C}{\vert x\vert ^{\gamma+1-d}}.
\end{align*}
These imply
\begin{align*}
&\int_{0}^{\ln\left(1+\vert x\vert ^{\alpha}\right)}e^{-\frac{t}{\alpha}}\int_{\frac{1}{2}e^{-t/\alpha}\vert x\vert \leq s(t)^{1/\alpha}\vert y\vert \leq2e^{-t/\alpha}\vert x\vert }\frac{\vert y\vert ^{-(\alpha+\gamma)}}{\left\vert s(t)^{1/\alpha}y+e^{-t/\alpha}x\right\vert ^{1-\beta}}dydt\\
\leq&\frac{C}{\vert x\vert ^{\gamma+1-d}}.
\end{align*}
For the second term,
\begin{align*}
&\int_{\ln\left(1+\vert x\vert ^{\alpha}\right)}^{\infty}e^{-\frac{t}{\alpha}}\int_{\frac{1}{2}s(t)^{-1/\alpha}e^{-t/\alpha}\vert x\vert \leq\vert y\vert \leq2s(t)^{-1/\alpha}e^{-t/\alpha}\vert x\vert }dydt\\
\leq&C\vert x\vert \int_{\ln\left(1+\vert x\vert ^{\alpha}\right)}^{\infty}e^{-\frac{t}{\alpha}}(e^{t}-1)^{-1/\alpha}\dif t\\
\leq&\frac{C\vert x\vert }{\vert x\vert }\int_{\ln\left(1+\vert x\vert ^{\alpha}\right)}^{\infty}e^{-\frac{t}{\alpha}}\dif t\leq Ce^{-\frac{1}{\alpha}\ln\left(1+\vert x\vert ^{\alpha}\right)}\leq C\vert x\vert ^{-1}.
\end{align*}
The proof is complete.

\section{Proof of Theorem \ref{t:bound}}\label{s:bound}
\subsection{Alternate expressions for $\mathcal L^{\alpha,\nu}$}

The following lemma gathers useful alternate expressions for the operator $\mathcal{L}^{\alpha,\nu}$.

\bl\label{properties}
Let $\alpha\in(0,2)$ and
%{\color{red}Let $f\in C^2(\R)$ be such that $\|  f''\|  _\infty<\infty$.}
 $f\in C^2(\mathbb{R}^{d})$.
We have, for all $x\in\mathbb{R}^{d}$ and $a>0$,

\noindent
a.) When $\alpha\in(1,2)$,
\begin{align*}
(\mathcal{L}^{\alpha,\nu}f)(x)=&\frac{d_\al}{\alpha}\int_{\mathbb S^{d-1}} \int_0^\infty\frac{\langle\theta,\nabla f(x+ u\theta)\rangle-\langle\theta,\nabla f(x)\rangle}{u^{\alpha}}du  \nu(d\theta)\\
=&\frac{d_\al a^{1-\alpha}}{\alpha}\int_{\mathbb S^{d-1}} \int_0^\infty\frac{\langle u\theta,\nabla f(x+au\theta)\rangle-\langle u\theta,\nabla f(x)\rangle}{u^{\alpha+1}}du  \nu(d\theta)
\end{align*}
provided that $\|  \nabla f\|  _{\infty}<\infty$ and $\sup_{x\in\mathbb{R}^{d}}\|  \nabla^{2}f(x)\|  _{{\rm op}}<\infty$.

\noindent
b.) When $\alpha=1$,
\begin{align*}
\quad(\mathcal{L}^{1,\nu}f)(x)=&d_{1}\int_{\mathbb{S}^{d-1}}\int_{0}^{\infty}\frac{\langle\theta,\nabla f(x+u\theta)\rangle-\langle\theta,\nabla f(x){\bf 1}_{(0,1]}(u)\rangle}{u}du\nu(d\theta)\\
=&d_{1}\int_{\mathbb{S}^{d-1}}\int_{0}^{\infty}\frac{\langle u\theta,\nabla f(x+au\theta)\rangle-\langle u\theta,\nabla f(x){\bf 1}_{(0,1]}(u)\rangle}{u^{2}}du\nu(d\theta)
\end{align*}
provided that $\int_{\mathbb{S}^{d-1}}\int_{0}^{\infty}\frac{\vert f(x+u\theta)-f(x)-\langle u\theta,\nabla f(x){\bf 1}_{(0,1]}(u)\rangle\vert }{u^{2}}du\nu(d\theta)<\infty$ and\\ $\int_{\mathbb{S}^{d-1}}\int_{0}^{\infty}\frac{\vert \langle\theta,\nabla f(x+u\theta)\rangle-\langle\theta,\nabla f(x){\bf 1}_{(0,1]}(u)\rangle\vert }{u}du\nu(d\theta)<\infty.$

\noindent
c.)  When $\alpha\in(0,1),$
\begin{eqnarray*}
(\mathcal{L}^{\alpha,\nu}f)(x)&=&\frac{d_{\alpha}}{\alpha}\int_{\mathbb{S}^{d-1}}\int_{0}^{\infty}\frac{\langle\theta,\nabla f(x+u\theta)\rangle}{u^{\alpha}}du\nu(d\theta)\\
&=&\frac{d_{\alpha}a^{1-\alpha}}{\alpha}\int_{\mathbb{S}^{d-1}}\int_{0}^{\infty}\frac{\langle u\theta,\nabla f(x+au\theta)\rangle}{u^{\alpha+1}}du\nu(d\theta).
\end{eqnarray*}
provided $\int_{\mathbb{S}^{d-1}}\int_{0}^{\infty}\frac{\vert f(x+u\theta)-f(x)\vert }{u^{\alpha+1}}du\nu(d\theta)<\infty$, $\int_{\mathbb{S}^{d-1}}\int_{0}^{\infty}\frac{\vert \langle\theta,\nabla f(x+u\theta)\rangle\vert }{u^{\alpha}}du\nu(d\theta)<\infty$.
\el
\begin{proof}
Note that the conditions on $f$ ensure that all the integrals are well defined and we can use Fubini's theorem in the following proof.
\begin{align*}
(\mathcal{L}^{\alpha,\nu}f)(x)=&d_\al    \int_{\mathbb S^{d-1}} \int_0^\infty \left( f(x+ r\theta) - f(x) - k_\al(r) \langle r\theta, \nabla f(x) \rangle  \right)  \frac{dr}{r^{1+\al}}\nu(d\theta)\\
=&d_\al    \int_{\mathbb S^{d-1}} \int_0^\infty\int_{0}^{r}\left(\langle\theta,\nabla f(x+ u\theta)\rangle-k_\al(r)\langle\theta,\nabla f(x)\rangle\right)du  \frac{dr}{r^{1+\al}}\nu(d\theta)\\
=&d_\al    \int_{\mathbb S^{d-1}} \int_0^\infty\int_{u}^{\infty}\left(\langle\theta,\nabla f(x+ u\theta)\rangle-k_\al(r)\langle\theta,\nabla f(x)\rangle\right)\frac{dr}{r^{1+\al}}du  \nu(d\theta),
\end{align*}
since $k_\al(r)=\mathbf{1}_{\al=1, r\in (0,1)} + \mathbf{1}_{\al\in(1,2)}$ and $\int_{\mathbb{S}^{d-1}}\theta\nu(d\theta)=0$ when $\a=1$, we further have
\begin{align*}
(\mathcal{L}^{\alpha,\nu}f)(x)=&\frac{d_\al}{\alpha}\int_{\mathbb S^{d-1}} \int_0^\infty\frac{\langle\theta,\nabla f(x+ u\theta)-k_\al(u)\langle\theta,\nabla f(x)\rangle}{u^{\alpha}}du  \nu(d\theta)\\
=&\frac{d_\al a^{1-\alpha}}{\alpha}\int_{\mathbb S^{d-1}} \int_0^\infty\frac{\langle u\theta,\nabla f(x+au\theta)-k_\al(u)\langle u\theta,\nabla f(x)\rangle}{u^{\alpha+1}}du  \nu(d\theta).
\end{align*}
\end{proof}

Now we check that the solution $f$ to Stein's equation satisfies the integrability condition of the previous proposition.

\bl\label{alter}
Let $\alpha \in(\frac{1}{2},1]$ and $ h\in \mathcal H_\beta$ with $\beta \in (0,\alpha)$. Let $f$ be
defined as \eqref{solution}.
If $\alpha=1$ and $\int_{\mathbb{S}^{d-1}}\theta\nu(d\theta)=0,$ then for any $x\in\mathbb{R}^{d},$
\begin{align*}
\int_{\mathbb{S}^{d-1}}\int_{0}^{\infty}\frac{\vert \langle\theta,\nabla f(x+r\theta)\rangle-\langle\theta,\nabla f(x){\bf 1}_{(0,1]}(r)\rangle\vert }{r}\dif r\dif\theta\leq C(1+\ln\vert x\vert ).
\end{align*}

If $\alpha\in(0,1)$, then for any $x\in\mathbb{R}^{d},$
\begin{align*}
\int_{\mathbb{S}^{d-1}}\int_{0}^{\infty}\frac{\vert \langle\theta,\nabla f(x+r\theta)\rangle\vert }{r^\alpha}\dif r\dif\theta\leq C(1+\vert x\vert ^{1-\alpha}).
\end{align*}
\el

\begin{proof}
When $\alpha=1$ and $\vert x\vert <1$, we have
\begin{align*}
&\int_{\mathbb{S}^{d-1}}\int_{0}^{\infty}\frac{\vert \langle\theta,\nabla f(x+r\theta)\rangle-\langle\theta,\nabla f(x){\bf 1}_{(0,1]}(r)\rangle\vert }{r}\dif r\dif\theta\\
=& \int_{\mathbb{S}^{d-1}}\int_{0}^{1}\frac{\vert \langle\theta,\nabla f(x+r\theta)\rangle-\langle\theta,\nabla f(x)\rangle\vert }{r}\dif r\dif\theta+\int_{\mathbb{S}^{d-1}}\int_{1}^{\infty}\frac{\vert \langle\theta,\nabla f(x+r\theta)\rangle\vert }{r}\dif r\dif\theta.
\end{align*}
By (\ref{L3}), we have
\begin{align*}
\int_{\mathbb{S}^{d-1}}\int_{0}^{1}\frac{\vert \langle\theta,\nabla f(x+r\theta)\rangle-\langle\theta,\nabla f(x)\rangle\vert }{r}\dif r\dif\theta\leq&C\int_{0}^{1}(1-\ln r)\dif r\dif\theta\leq C,
\end{align*}
and by (\ref{L9}) and (\ref{decay1}), we have
\begin{align}\label{large1}
&\int_{\mathbb{S}^{d-1}}\int_{1}^{\infty}\frac{\vert \langle\theta,\nabla f(x+r\theta)\rangle\vert }{r}\dif r\dif\theta\nonumber\\
=&\int_{\mathbb{S}^{d-1}}\int_{1}^{\infty}\frac{\vert \langle r\theta,\nabla f(x+r\theta)\rangle\vert }{r^{2}}\dif r\dif\theta\nonumber\\
\leq&\int_{\mathbb{S}^{d-1}}\int_{1}^{\infty}\frac{\vert \langle(x+r\theta),\nabla f(x+r\theta)\rangle\vert +\vert \langle x,\nabla f(x+r\theta)\rangle\vert }{r^{2}}\dif r\dif\theta\nonumber\\
\leq&C\int_{\mathbb{S}^{d-1}}\int_{1}^{\infty}\frac{1+\vert x+r\theta\vert ^{\beta}+\vert x\vert }{r^{2}}\dif r\dif\theta\leq C\int_{1}^{\infty}\frac{1+\vert x\vert +r^{\beta}}{r^{2}}\leq C(1+\vert x\vert ).
\end{align}

When $\alpha=1$ and $\vert x\vert \geq1$, we have
\begin{align*}
&\int_{\mathbb{S}^{d-1}}\int_{0}^{\infty}\frac{\vert \langle\theta,\nabla f(x+r\theta)\rangle-\langle\theta,\nabla f(x){\bf 1}_{(0,1]}(r)\rangle\vert }{r}\dif r\dif\theta\\
=& \int_{\mathbb{S}^{d-1}}\!\int_{0}^{\vert x\vert }\frac{\vert \langle\theta,\nabla f(x+r\theta)\rangle-\langle\theta,\nabla f(x)\rangle\vert }{r}\dif r\dif\theta+\int_{\mathbb{S}^{d-1}}\!\int_{\vert x\vert }^{\infty}\frac{\vert \langle\theta,\nabla f(x+r\theta)\rangle\vert }{r}\dif r\dif\theta.
\end{align*}
By (\ref{L3}) and (\ref{L9}), we have
\begin{align*}
&\int_{\mathbb{S}^{d-1}}\int_{0}^{\vert x\vert }\frac{\vert \langle\theta,\nabla f(x+r\theta)\rangle-\langle\theta,\nabla f(x){\bf 1}_{(0,1]}(r)\rangle\vert }{r}\dif r\dif\theta\\
=&\int_{\mathbb{S}^{d-1}}\int_{0}^{1}\frac{\vert \langle\theta,\nabla f(x+r\theta)\rangle-\langle\theta,\nabla f(x)\rangle\vert }{r}\dif r\dif\theta
+\int_{\mathbb{S}^{d-1}}\int_{1}^{\vert x\vert }\frac{\vert \langle\theta,\nabla f(x+r\theta)\rangle\vert }{r}\dif r\dif\theta\\
\leq&C(1+\ln\vert x\vert ),
\end{align*}
and by (\ref{L9}) and (\ref{decay1}), we have
\begin{align*}
&\int_{\mathbb{S}^{d-1}}\int_{\vert x\vert }^{\infty}\frac{\vert \langle\theta,\nabla f(x+r\theta)\rangle\vert }{r}\dif r\dif\theta\nonumber\\
=&\int_{\mathbb{S}^{d-1}}\int_{\vert x\vert }^{\infty}\frac{\vert \langle r\theta,\nabla f(x+r\theta)\rangle\vert }{r^{2}}\dif r\dif\theta\nonumber\\
\leq&\int_{\mathbb{S}^{d-1}}\int_{\vert x\vert }^{\infty}\frac{\vert \langle(x+r\theta),\nabla f(x+r\theta)\rangle\vert +\vert \langle x,\nabla f(x+r\theta)\rangle\vert }{r^{2}}\dif r\dif\theta\nonumber\\
\leq&C\int_{\mathbb{S}^{d-1}}\int_{\vert x\vert }^{\infty}\frac{1+\vert x+r\theta\vert ^{\beta}+\vert x\vert }{r^{2}}\dif r\dif\theta\\
\leq& C\int_{\vert x\vert }^{\infty}\frac{1+\vert x\vert +r^{\beta}}{r^{2}}\leq C(1+\vert x\vert )\vert x\vert ^{-1}+\vert x\vert ^{\beta-1}\leq C.
\end{align*}

When $\alpha\in (0,1)$ and $\vert x\vert <1$, we have
\begin{align*}
&\int_{\mathbb{S}^{d-1}}\int_{0}^{\infty}\frac{\vert \langle\theta,\nabla f(x+r\theta)\rangle\vert }{r^\alpha}\dif r\dif\theta\\
=& \int_{\mathbb{S}^{d-1}}\int_{0}^{1}\frac{\vert \langle\theta,\nabla f(x+r\theta)\rangle\vert }{r^\alpha}\dif r\dif\theta+\int_{\mathbb{S}^{d-1}}\int_{1}^{\infty}\frac{\vert \langle\theta,\nabla f(x+r\theta)\rangle\vert }{r^\alpha}\dif r\dif\theta.
\end{align*}
By (\ref{firstb}), we have
\begin{align*}
\int_{\mathbb{S}^{d-1}}\int_{0}^{1}\frac{\vert \langle\theta,\nabla f(x+r\theta)\rangle\vert }{r^\alpha}\dif r\dif\theta\leq&C\int_{0}^{1}\frac{1}{r^\alpha}\dif r\dif\theta\leq C,
\end{align*}
and by (\ref{firstb}) and (\ref{decay}), we have
\begin{align}\label{large1}
&\int_{\mathbb{S}^{d-1}}\int_{1}^{\infty}\frac{\vert \langle\theta,\nabla f(x+r\theta)\rangle\vert }{r^\alpha}\dif r\dif\theta\nonumber\\
=&\int_{\mathbb{S}^{d-1}}\int_{1}^{\infty}\frac{\vert \langle r\theta,\nabla f(x+r\theta)\rangle\vert }{r^{\alpha+1}}\dif r\dif\theta\nonumber\\
\leq&\int_{\mathbb{S}^{d-1}}\int_{1}^{\infty}\frac{\vert \langle(x+r\theta),\nabla f(x+r\theta)\rangle\vert +\vert \langle x,\nabla f(x+r\theta)\rangle\vert }{r^{\alpha+1}}\dif r\dif\theta\nonumber\\
\leq&C\int_{\mathbb{S}^{d-1}}\int_{1}^{\infty}\frac{1+\vert x+r\theta\vert ^{\beta}+\vert x\vert }{r^{\alpha+1}}\dif r\dif\theta\leq C\int_{1}^{\infty}\frac{1+\vert x\vert +r^{\beta}}{r^{\alpha+1}}\leq C(1+\vert x\vert ).
\end{align}

When $\alpha\in(0,1)$ and $\vert x\vert \geq1$, we have
\begin{align*}
&\int_{\mathbb{S}^{d-1}}\int_{0}^{\infty}\frac{\vert \langle\theta,\nabla f(x+r\theta)\rangle\vert }{r^\alpha}\dif r\dif\theta\\
=& \int_{\mathbb{S}^{d-1}}\int_{0}^{\vert x\vert }\frac{\vert \langle\theta,\nabla f(x+r\theta)\rangle\vert }{r^\alpha}\dif r\dif\theta+\int_{\mathbb{S}^{d-1}}\int_{\vert x\vert }^{\infty}\frac{\vert \langle\theta,\nabla f(x+r\theta)\rangle\vert }{r^\alpha}\dif r\dif\theta.
\end{align*}
By (\ref{firstb}), we have
\begin{align*}
\int_{\mathbb{S}^{d-1}}\int_{0}^{\vert x\vert }\frac{\vert \langle\theta,\nabla f(x+r\theta)\rangle\vert }{r^\alpha}\dif r\dif\theta\leq&C\int_{0}^{\vert x\vert }\frac{1}{r^\alpha}\dif r\dif\theta\leq C\vert x\vert ^{1-\alpha},
\end{align*}
and by (\ref{firstb}) and (\ref{decay}), we have
\begin{align*}
&\int_{\mathbb{S}^{d-1}}\int_{\vert x\vert }^{\infty}\frac{\vert \langle\theta,\nabla f(x+r\theta)\rangle\vert }{r^\alpha}\dif r\dif\theta\nonumber\\
=&\int_{\mathbb{S}^{d-1}}\int_{\vert x\vert }^{\infty}\frac{\vert \langle r\theta,\nabla f(x+r\theta)\vert }{r^{\alpha+1}}\dif r\dif\theta\nonumber\\
\leq&\int_{\mathbb{S}^{d-1}}\int_{\vert x\vert }^{\infty}\frac{\vert \langle(x+r\theta),\nabla f(x+r\theta)\rangle\vert +\vert \langle x,\nabla f(x+r\theta)\rangle\vert }{r^{\alpha+1}}\dif r\dif\theta\nonumber\\
\leq&C\int_{\mathbb{S}^{d-1}}\int_{\vert x\vert }^{\infty}\frac{1+\vert x+r\theta\vert ^{\beta}+\vert x\vert }{r^{\alpha+1}}\dif r\dif\theta\\
\leq& C\int_{\vert x\vert }^{\infty}\frac{1+\vert x\vert +r^{\beta}}{r^{\alpha+1}}\leq C(1+\vert x\vert )\vert x\vert ^{-\alpha}+\vert x\vert ^{\beta-\alpha}\leq C(1+\vert x\vert ^{1-\alpha}).
\end{align*}
The proof is complete.
\end{proof}

\bl\label{alter1}
Let $\alpha \in(0,\frac{1}{2}]$ and $ h\in \mathcal H_\beta\cap\mathcal{F}_{\beta}$ with $\beta \in (0,\alpha)$. Let $f$ be
defined as \eqref{solution} and denote $\tilde{\beta}:=\max\{\beta,d-\gamma\}\in(0,\alpha)$. Then, for any $x\in\mathbb{R}^{d},$
\begin{align*}
\int_{\mathbb{S}^{d-1}}\int_{0}^{\infty}\frac{\vert \langle\theta,\nabla f(x+r\theta)\rangle\vert }{r^\alpha}\dif r\dif\theta\leq C(1+\vert x\vert ^{\tilde{\beta}-\alpha}).
\end{align*}
\el

\begin{proof}
When $\vert x\vert <1$, the proof is similar to the proof of Lemma \ref{alter}. When $\vert x\vert \geq1$, Proposition \ref{prop} implies that
\begin{align*}
&\int_{\mathbb{S}^{d-1}}\int_{0}^{\infty}\frac{\vert \langle\theta,\nabla f(x+r\theta)\rangle\vert }{r^\alpha}\dif r\dif\theta\\
\leq&C\int_{\mathbb{S}^{d-1}}\int_{0}^{\infty}\frac{\vert x+r\theta\vert ^{\tilde{\beta}-1}}{r^\alpha}\dif r\dif\theta\\
\leq&C\int_{0}^{\infty}\frac{\left\vert \vert x\vert -r\right\vert ^{\tilde{\beta}-1}}{r^\alpha}\dif r
=C\left[\int_{0}^{\vert x\vert }\frac{(\vert x\vert -r)^{\tilde{\beta}-1}}{r^\alpha}\dif r+\int_{\vert x\vert }^{\infty}\frac{(r-\vert x\vert )^{\tilde{\beta}-1}}{r^\alpha}\dif r\right].
\end{align*}
One can write
\begin{align*}
\int_{0}^{\vert x\vert }\frac{(\vert x\vert -r)^{\tilde{\beta}-1}}{r^\alpha}\dif r=&\int_{0}^{\frac{\vert x\vert }{2}}\frac{(\vert x\vert -r)^{\tilde{\beta}-1}}{r^\alpha}\dif r+\int_{\frac{\vert x\vert }{2}}^{\vert x\vert }\frac{(\vert x\vert -r)^{\tilde{\beta}-1}}{r^\alpha}\dif r\\
\leq&2^{1-\tilde{\beta}}\vert x\vert ^{\tilde{\beta}-1}\int_{0}^{\frac{\vert x\vert }{2}}\frac{1}{r^\alpha}\dif r+2^{\alpha}\vert x\vert ^{-\alpha}\int_{\frac{\vert x\vert }{2}}^{\vert x\vert }(\vert x\vert -r)^{\tilde{\beta}-1}\dif r\\
\leq& C\vert x\vert ^{\tilde{\beta}-\alpha},
\end{align*}
whereas
\begin{align*}
\int_{\vert x\vert }^{\infty}\frac{(r-\vert x\vert )^{\tilde{\beta}-1}}{r^\alpha}\dif r=\frac{1}{\tilde{\beta}}\int_{\vert x\vert }^{\infty}\frac{1}{r^{\alpha}}\dif(r-\vert x\vert )^{\tilde{\beta}}=&\frac{\alpha}{\tilde{\beta}}\int_{\vert x\vert }^{\infty}\frac{(r-\vert x\vert )^{\tilde{\beta}}}{r^{\alpha+1}}\dif r\\
\leq&\frac{\alpha}{\tilde{\beta}(\alpha-\beta)}\vert x\vert ^{\tilde{\beta}-\alpha}.
\end{align*}
The proof is complete.
\end{proof}

\subsection{Taylor-like expansion}
In order to prove the main Theorem, we shall make use of the following lemmas.

$\bullet$ \underline{{\bf $\alpha\in(1,2)$}} {\bf :}
\begin{lemma}\label{2case}
Consider $\alpha\in(1,2)$. Let $X$ be a d-dimensional random vector with density function $p_{X}(r)dr\nu(d\theta)$ and $Y$ be a $d$-dimensional random vector, which is independent of $X$. For any $a>0$ and $f$ is defined as \eqref{solution}, denote
\begin{align*}
T_{1}:=\Big\vert \mathbb{E}[\langle X,\nabla f(Y+aX)\rangle-\langle X,\nabla f(Y)\rangle]-\frac{\alpha^{2}}{d_{\alpha}}a^{\alpha-1}\mathbb{E}[\mathcal{L}^{\alpha,\nu}f(Y)]\Big\vert ,
\end{align*}
then, we have
\begin{align*}
T_{1}\leq C\left(a\int_{0}^{a^{-1}}r^{2}\big\vert \frac{\alpha}{r^{\alpha+1}}-p_{X}(r)\big\vert dr+\int_{a^{-1}}^{\infty}\big\vert \frac{\alpha}{r^{\alpha}}-rp_{X}(r)dr\big\vert \right).
\end{align*}
\end{lemma}
\begin{proof}
From Lemma \ref{properties}, we have
\begin{align*}
\frac{\alpha^{2}}{d_{\alpha}}a^{\alpha-1}\mathbb{E}[\mathcal{L}^{\alpha,\nu}f(Y)]
=&\alpha\mathbb{E}\big[\int_{\mathbb{S}^{d-1}}\int_{0}^{\infty}\frac{\langle r\theta,\nabla f(Y+ar\theta)\rangle-\langle r\theta,\nabla f(Y)\rangle}{r^{\alpha+1}}dr\nu(d\theta)\big],
\end{align*}
and
\begin{align*}
&\mathbb{E}[\langle X,\nabla f(Y+aX)\rangle-\langle X,\nabla f(Y)\rangle]\\
=&\mathbb{E}\big[\int_{\mathbb{S}^{d-1}}\int_{0}^{\infty}\left(\langle r\theta,\nabla f(Y+ar\theta)\rangle-\langle r\theta,\nabla f(Y)\rangle\right)p_{X}(r)dr\nu(d\theta)\big].
\end{align*}
These imply
\begin{align}\label{taylor1}
T_{1}\leq&\mathbb{E}\Big\vert \int_{\mathbb{S}^{d-1}}\int_{0}^{a^{-1}}\langle r\theta,\nabla f(Y+ar\theta)-\nabla f(Y)\rangle\big[\frac{\alpha}{r^{\alpha+1}}dr\nu(d\theta)-p_{X}(r)dr\nu(d\theta)\big]\Big\vert \nonumber\\
&+\mathbb{E}\Big\vert \int_{\mathbb{S}^{d-1}}\int_{a^{-1}}^{\infty}\langle r\theta,\nabla f(Y+ar\theta)-\nabla f(Y)\rangle\big[\frac{\alpha}{r^{\alpha+1}}dr\nu(d\theta)-p_{X}(r)dr\nu(d\theta)\big]\Big\vert \nonumber\\
:=&\mathcal{I}_{1}+\mathcal{I}_{2}.
\end{align}
Then, one can write by (\ref{second order}) that
\begin{align*}
\mathcal{I}_{1}\leq Ca\int_{\mathbb{S}^{d-1}}\int_{0}^{a^{-1}}r^{2}\big\vert \frac{\alpha}{r^{\alpha+1}}dr-p_{X}(r)dr\big\vert \nu(d\theta)
\leq Ca\int_{0}^{a^{-1}}r^{2}\big\vert \frac{\alpha}{r^{\alpha+1}}-p_{X}(r)\big\vert dr,
\end{align*}
whereas by (\ref{first order})
\begin{align*}
\mathcal{I}_{2}\leq C\int_{\mathbb{S}^{d-1}}\int_{a^{-1}}^{\infty}\big\vert \frac{\alpha}{r^{\alpha}}-rp_{X}(r)dr\big\vert \nu(d\theta)
\leq C\int_{a^{-1}}^{\infty}\big\vert \frac{\alpha}{r^{\alpha}}-rp_{X}(r)dr\big\vert ,
\end{align*}
the desired result follows.
\end{proof}

$\bullet$ \underline{{\bf $\alpha=1$}} {\bf :}
\bl\label{Lemregu2}
Consider $\alpha=1$ and $\int_{\mathbb{S}^{d-1}}\theta\nu(d\theta)=0.$ Let $X$ be a d-dimensional random vector with density function $p_{X}(r)dr\nu(d\theta)$ and suppose that $p_{X}(r)$ is non-increasing. Let $Y$ be a $d$-dimensional random vector, which is independent of $X$. For any $a>0$ and $f$ is defined as \eqref{solution}, denote
\begin{align*}
T_{2}:=\Big\vert \mathbb{E}[\langle X,\nabla f(Y+aX)\rangle-\langle X,\nabla f(Y){\bf 1}_{(0,1]}(a\vert X\vert )\rangle]-\frac{1}{d_{1}}\mathbb{E}[\mathcal{L}^{1,\nu}f(Y)]\Big\vert ,
\end{align*}
then, we have
\begin{align*}
T_{2}\leq C\Big(a\int_{0}^{a^{-1}}r^{2}\big(1-\log(ar)\big)\big\vert \frac{\alpha}{r^{2}}-p_{X}(r)\big\vert dr+a^{\beta-1}\int_{a^{-1}}^{\infty}t^{\beta}\big\vert d\big[\frac{\alpha}{t}-tp_{X}(t)\big]\big\vert \Big).
\end{align*}
\el
\begin{proof}
By the same argument as the proof of (\ref{taylor1}), we have
\begin{align}
T_{2}\leq&\mathbb{E}\Big\vert \int_{\mathbb{S}^{d-1}}\int_{0}^{a^{-1}}\langle r\theta,\nabla f(Y+ar\theta)-\nabla f(Y)\rangle]\big[\frac{1}{r^{2}}dr\nu(d\theta)-p_{X}(r)dr\nu(d\theta)\big]\Big\vert \nonumber\\
&+\mathbb{E}\Big\vert \int_{\mathbb{S}^{d-1}}\int_{a^{-1}}^{\infty}\langle r\theta,\nabla f(Y+ar\theta)\rangle\big[\frac{1}{r^{2}}dr\nu(d\theta)-p_{X}(r)dr\nu(d\theta)\big]\Big\vert \nonumber\\
:=&\mathcal{J}_{1}+\mathcal{J}_{2}.
\end{align}
On the one hand, (\ref{L3}) derives that
\begin{align*}
\mathcal{J}_{1}\leq&\mathbb{E}\Big[\int_{\mathbb{S}^{d-1}}\int_{0}^{a^{-1}}\vert \langle r\theta,\nabla f(Y+ar\theta)-\nabla f(Y)\rangle\vert \big\vert \frac{1}{r^{2}}dr-p_{X}(r)dr\big\vert \nu(d\theta)\Big]\\ \leq&Ca\int_{\mathbb{S}^{d-1}}\int_{0}^{a^{-1}}r^{2}\big(1-\log(ar)\big)\big\vert \frac{1}{r^{2}}-p_{X}(r)\big\vert dr\nu(d\theta)\\
\leq&Ca\int_{0}^{a^{-1}}r^{2}\big(1-\log(ar)\big)\big\vert \frac{1}{r^{2}}-p_{X}(r)\big\vert dr.
\end{align*}
On the other hand, noting that $p_{X}(r)$ is non-increasing and $\int_{0}^{\infty}p_{X}(r)dr<\infty,$ which imply $\lim_{r\rightarrow\infty}rp_{X}(r)=0.$ So we have by integration by parts that
\begin{align*}
\mathcal{J}_{2}=&\mathbb{E}\Big\vert \int_{\mathbb{S}^{d-1}}\int_{a^{-1}}^{\infty}\langle\theta,\nabla f(Y+ar\theta)\rangle\big[\frac{1}{r}-rp_{X}(r)\big]dr\nu(d\theta)\Big\vert \\
=&\mathbb{E}\Big\vert \int_{\mathbb{S}^{d-1}}\int_{a^{-1}}^{\infty}\langle\theta,\nabla f(Y+ar\theta)\rangle dr\int_{r}^{\infty}d\big[\frac{1}{t}-tp_{X}(t)\big]\nu(d\theta)\Big\vert \\
=&\mathbb{E}\Big\vert \int_{\mathbb{S}^{d-1}}\int_{a^{-1}}^{\infty}d\big[\frac{1}{t}-tp_{X}(t)\big]\int_{a^{-1}}^{t}\langle\theta,\nabla f(Y+ar\theta)\rangle dr\nu(d\theta)\Big\vert \\
=&a^{-1}\mathbb{E}\Big\vert \int_{\mathbb{S}^{d-1}}\int_{a^{-1}}^{\infty}\big(f(Y+at\theta)-f(Y+\theta)\big)d\big[\frac{1}{t}-tp_{X}(t)\big]\nu(d\theta)\Big\vert ,
\end{align*}
then we have by (\ref{L4})
\begin{align}\label{taylor2}
\mathcal{J}_{2}\leq&Ca^{\beta-1}\int_{\mathbb{S}^{d-1}}\int_{a^{-1}}^{\infty}t^{\beta}\big\vert d\big[\frac{1}{t}-tp_{X}(t)\big]\big\vert \nu(d\theta)
\leq Ca^{\beta-1}\int_{a^{-1}}^{\infty}t^{\beta}\big\vert d\big[\frac{1}{t}-tp_{X}(t)\big]\big\vert ,
\end{align}
the desired conclusion follows.
\end{proof}

$\bullet$ \underline{{\bf $\alpha\in(0,1)$}} {\bf :} For any $x\in\mathbb{R}^{d}$, we have
\begin{align*}
\int_{\mathbb{S}^{d-1}}\int_{0}^{1}\frac{\langle r\theta,\nabla f(x)\rangle}{r^{\alpha+1}}dr\nu(d\theta)=\frac{1}{1-\alpha}\int_{\mathbb{S}^{d-1}}\langle\theta,\nabla f(x)\rangle\nu(d\theta),
\end{align*}
which follows that
\begin{align}\label{alp1}
&\frac{1}{d_{\alpha}}\mathcal{L}^{\alpha}f(x)-\frac{1}{\alpha(1-\alpha)}\int_{\mathbb{S}^{d-1}}\langle\theta,\nabla f(Y)\rangle\nu(d\theta)\nonumber\\
=&\frac{1}{\alpha}\int_{\mathbb{S}^{d-1}}\int_{0}^{\infty}\frac{\langle r\theta,\nabla f(x+r\theta)\rangle-\langle r\theta,\nabla f(x){\bf 1}_{(0,1]}(r)\rangle}{r^{\alpha+1}}dr\nu(d\theta).
\end{align}
According to (\ref{alp1}), we have the following Taylor-like expansion.

\bl\label{random}\label{Lemregu1}
Consider $\alpha\in(0,1)$ and when $\alpha\in(0,\frac{1}{2}]$, we further assume $h\in\mathcal{F}_{\beta}$. Let $X$ be a d-dimensional random vector with density function $p_{X}(r)dr\nu(d\theta)$ and suppose that $p_{X}(r)$ is non-increasing. Let $Y$ be a $d$-dimensional random vector, which is independent of $X$. For any $a>0$ and $f$ is defined as \eqref{solution}, denote
\begin{align*}
T_{3}:=\Big\vert &\mathbb{E}[\langle X,\nabla f(Y+aX)\rangle-\langle X,\nabla f(Y){\bf 1}_{(0,1]}(a\vert X\vert )\rangle]\\
&-\frac{\alpha^{2}}{d_{\alpha}}a^{\alpha-1}\mathbb{E}[\mathcal{L}^{\alpha,\nu}f(Y)-\frac{d_{\alpha}}{\alpha(1-\alpha)}\int_{\mathbb{S}^{d-1}}\langle\theta,\nabla f(Y)\nu(d\theta)\rangle]\Big\vert ,
\end{align*}
then, we have
\begin{align*}
T_{3}\leq C\Big(a^{\alpha}\int_{0}^{a^{-1}}r^{\alpha+1}\big\vert \frac{\alpha}{r^{\alpha+1}}-p_{X}(r)\big\vert dr
+a^{\beta-1}\int_{a^{-1}}^{\infty}t^{\beta}\big\vert d\big[\frac{\alpha}{t^{\alpha}}-tp_{X}(t)\big]\big\vert \Big).
\end{align*}
\el
\begin{proof}
According to (\ref{alp1}), by the same argument as the proof of (\ref{taylor1}), we have
\begin{align*}
T_{3}\leq&\mathbb{E}\Big\vert \int_{\mathbb{S}^{d-1}}\int_{0}^{a^{-1}}[\langle r\theta,\nabla f(Y+ar\theta)-\nabla f(Y)\rangle]\big[\frac{\alpha}{r^{\alpha+1}}dr\nu(d\theta)-p_{X}(r)dr\nu(d\theta)\big]\Big\vert \nonumber\\
&+\mathbb{E}\Big\vert \int_{\mathbb{S}^{d-1}}\int_{a^{-1}}^{\infty}\langle r\theta,\nabla f(Y+ar\theta)\rangle\big[\frac{\alpha}{r^{\alpha+1}}dr\nu(d\theta)-p_{X}(r)dr\nu(d\theta)\big]\Big\vert \nonumber\\
:=&\mathcal{I}+\mathcal{II}.
\end{align*}
One can write by (\ref{L1}) that
\begin{align*}
\mathcal{I}\leq&\mathbb{E}\Big[\int_{\mathbb{S}^{d-1}}\int_{0}^{a^{-1}}\vert \langle r\theta,\nabla f(Y+ar\theta)\rangle-\langle r\theta,\nabla f(Y)\rangle\vert \big\vert \frac{\alpha}{r^{\alpha+1}}dr-p_{X}(r)dr\big\vert \nu(d\theta)\Big]\\
\leq&Ca^{\alpha}\int_{\mathbb{S}^{d-1}}\int_{0}^{a^{-1}}r^{\alpha+1}\big\vert \frac{\alpha}{r^{\alpha+1}}-p_{X}(r)\big\vert dr\nu(d\theta)\\
=&Ca^{\alpha}\int_{0}^{a^{-1}}r^{\alpha+1}\big\vert \frac{\alpha}{r^{\alpha+1}}-p_{X}(r)\big\vert dr,
\end{align*}
whereas by the same argument as the proof of (\ref{taylor2}),
\begin{align*}
\mathcal{II}\leq&Ca^{\beta-1}\int_{\mathbb{S}^{d-1}}\int_{a^{-1}}^{\infty}t^{\beta}\big\vert d\big[\frac{\alpha}{t^{\alpha}}-tp_{X}(t)\big]\big\vert \nu(d\theta)
=Ca^{\beta-1}\int_{a^{-1}}^{\infty}t^{\beta}\big\vert d\big[\frac{\alpha}{t^{\alpha}}-tp_{X}(t)\big]\big\vert ,
\end{align*}
the desired conclusion follows.
\end{proof}

\subsection{Truncation for random variable $X$}

In the case $\alpha\in(0,1],$ the random variable $X$ considered here satisfies $\mathbb{E}\vert X\vert ^{\alpha}=\infty$. Therefore, we need to truncate the random variable $X$.

\bl\label{trunc1}
Consider $\alpha\in(0,1]$ and when $\alpha=1$ we assume $\int_{\mathbb{S}^{d-1}}\theta\nu(d\theta)=0.$ Let $X$ be a d-dimensional random vector with density function $p_{X}(r)dr\nu(d\theta)$ and $f$ be defined as \eqref{solution}. Then for any $0<a<1$ and $z\in\mathbb{R}^{d},$ we have\\
1.) when $\alpha=1,$
\begin{align*}
\mathbb{E}\big\vert \mathcal{L}^{1,\nu}f(z)-\mathcal{L}^{1,\nu}f(z+aX)\big\vert \!\leq\! C\Big(\int_{a^{-1}}^{\infty}p_{X}(r)dr+a\int_{0}^{a^{-1}}r\big(1-\log(ar)\big)p_{X}(r)dr\Big).
\end{align*}
2.) when $\alpha\in(0,1),$
\begin{align*}
\mathbb{E}\big\vert \mathcal{L}^{\alpha,\nu}f(z)-\mathcal{L}^{\alpha,\nu}f(z+aX)\big\vert \leq C\Big(\int_{a^{-1}}^{\infty}p_{X}(r)dr+a^{\frac{1+\alpha}{2}}\int_{0}^{a^{-1}}r^{\frac{1+\alpha}{2}}p_{X}(r)dr\Big).
\end{align*}
\el
\begin{proof}
Observe
\begin{align*}
&\mathbb{E}\Big[\big\vert \mathcal{L}^{\alpha,\nu}f(z)-\mathcal{L}^{\alpha,\nu}f(z+aX)\big\vert \Big]\\
=&\mathbb{E}\Big[\big\vert \mathcal{L}^{\alpha,\nu}f(z)-\mathcal{L}^{\alpha,\nu}f(z+aX)\big\vert \big[{\bf 1}_{(a^{-1},\infty)}(\vert X\vert )+{\bf 1}_{((0,a^{-1}))}(\vert X\vert )\big]\Big]:=\mathrm{I}+\mathrm{II}.
\end{align*}
When $\alpha=1,$ one can write by (\ref{L7})
\begin{align*}
\mathrm{I}\leq C\mathbb{P}\big(\vert X\vert >a^{-1}\big)=C\int_{\mathbb{S}^{d-1}}\int_{a^{-1}}^{\infty}p_{X}(r)dr\nu(d\theta)\leq C\int_{a^{-1}}^{\infty}p_{X}(r)dr,
\end{align*}
whereas by (\ref{L8})
\begin{align*}
\mathrm{II}\leq Ca\mathbb{E}\big[\vert X\vert \big(1-\log(a\vert X\vert )\big){\bf 1}_{(0,a^{-1})}(\vert X\vert )\big]\leq Ca\int_{0}^{a^{-1}}r\big(1-\log(ar)\big)p_{X}(r)dr.
\end{align*}
When $\alpha\in(0,1),$ one can write by (\ref{L5})
\begin{align*}
\mathrm{I}\leq C_{\alpha,\beta}\mathbb{P}\big(\vert X\vert >a^{-1}\big)=C\int_{\mathbb{S}^{d-1}}\int_{a^{-1}}^{\infty}p_{X}(r)dr\nu(d\theta)=C\int_{a^{-1}}^{\infty}p_{X}(r)dr,
\end{align*}
whereas by (\ref{L6}) with $\eta=\frac{1+\alpha}{2}\in(\alpha,1)$
\begin{align*}
\mathrm{II}\leq Ca^{\frac{1+\alpha}{2}}\mathbb{E}\big[\vert X\vert ^{\frac{1+\alpha}{2}}{\bf 1}_{(0,a^{-1})}(\vert X\vert )\big]=Ca^{\frac{1+\alpha}{2}}\int_{0}^{a^{-1}}r^{\frac{1+\alpha}{2}}p_{X}(r)dr,
\end{align*}
the desired conclusion follows.
\end{proof}

%\bl\label{trunc2}
%Consider $\alpha=1$ and $\int_{\mathbb{S}^{d-1}}\theta\nu(d\theta)=0.$ Let $X$ be a d-dimensional random vector with distribution function $p_{X}(r)dr\nu(d\theta)$ and $f$ be defined as above. Then for any $0<a<1$ and $z\in\mathbb{R}^{d},$ we have
%\begin{align*}
%\mathbb{E}\big\vert \nabla f(z)-\nabla f(z+aX)\big\vert \leq C_{\beta,d}\Big(\int_{a^{-1}}^{\infty}p_{X}(r)dr+a\int_{0}^{a^{-1}}r\big(2-\log(ar)\big)p_{X}(r)dr\Big)
%\end{align*}
%\el
%\begin{proof}
%We have by (\ref{L9}) and (\ref{L3}) that
%\begin{align*}
%\mathbb{E}\big\vert \nabla f(z)-\nabla f(z+aX)\big\vert =&\mathbb{E}\Big[\big\vert \nabla f(z)-\nabla f(z+aX)\big\vert \big({\bf 1}_{(a^{-1},\infty)}(\vert X\vert )+{\bf 1}_{((0,a^{-1}))}(\vert X\vert )\big)\Big]\\
%\leq&2\mathbb{P}(\vert X\vert >a^{-1})+C_{\beta}a\mathbb{E}\big[\vert X\vert (2-\log\vert aX\vert ){\bf 1}_{(0,a^{-1})}(\vert X\vert )\big]\\
%\leq&C_{\beta,d}\Big(\int_{a^{-1}}^{\infty}p_{X}(r)dr+a\int_{0}^{a^{-1}}r\big(2-\log(ar)\big)p_{X}(r)dr\Big).
%\end{align*}
%The proof is complete.
%\end{proof}

\subsection{the proof of Theorem \ref{t:bound}}

Now, we are ready to use the Leave-one-out method to prove our second main result.

\begin{proof}[Proof of Theorem \ref{t:bound}]
By Eq. (\ref{e:steinsequation}), we have
\begin{align*}
\alpha\Big\vert \mathbb{E}\big[h(S_{n})\big]-\pi(h)\Big\vert =\Big\vert \mathbb{E}[\alpha\mathcal{L}^{\alpha,\nu}f(S_{n})-\langle S_{n},\nabla f(S_{n})\rangle]\Big\vert \leq\mathcal{N}_{1}+\mathcal{N}_{2}+\mathcal{N}_{3},
\end{align*}
where
\begin{align*}
\mathcal{N}_{1}=\frac{\alpha}{n}\sum_{i=1}^{n}\Big\vert \mathbb{E}\big[(\mathcal{L}^{\alpha,\nu}f)(S_{n}(i))-\mathbb{E}\big[(\mathcal{L}^{\alpha,\nu}f)(S_{n})\big]\Big\vert .
\end{align*}
If $\alpha\in(1,2)$,
\begin{align*}
\mathcal{N}_{2}=l_{n}^{-\frac{1}{\alpha}}\sum_{i=1}^{n}\Big\vert \mathbb{E}\big[\langle\eta_{n,i},\nabla f(S_{n}(i)+l_{n}^{-\frac{1}{\alpha}}\eta_{n,i})\rangle\big]&-\mathbb{E}\big[\langle\eta_{n,i},\nabla f(S_{n}(i))\rangle\big]\\
&-\frac{\alpha^{2}}{d_{\alpha}}l_{n}^{\frac{1-\alpha}{\alpha}}\mathbb{E}\big[(\mathcal{L}^{\alpha,\nu}f)(S_{n}(i))\big]\Big\vert
\end{align*}
\begin{align*}
\mathcal{N}_{3}=l_{n}^{-\frac{1}{\alpha}}\sum_{i=1}^{n}\left\vert \mathbb{E}\big[\eta_{n,i}\big]\right\vert \left\vert \mathbb{E}\Big[\nabla f(S_{n}(i))-\nabla f\big(S_{n}(i)+l_{n}^{-\frac{1}{\alpha}}\eta_{n,i}\big)\Big]\right\vert ;
\end{align*}
If $\alpha=1$
\begin{align*}
\mathcal{N}_{2}=l_{n}^{-1}\sum_{i=1}^{n}\Big\vert \mathbb{E}\big[\langle\eta_{n,i},\nabla f(S_{n}(i)+l_{n}^{-1}\eta_{n,i})\rangle\big]&-\mathbb{E}\big[\langle\eta_{n,i},\nabla f(S_{n}(i)){\bf 1}_{(0,l_{n}]}(\vert \eta_{n,i}\vert )\rangle\big]\\
&-\frac{1}{d_{1}}\mathbb{E}\big[(\mathcal{L}^{1,\nu}f)(S_{n}(i))\big]\Big\vert ,
\end{align*}
\begin{align*}
\mathcal{N}_{3}=l_{n}^{-1}\sum_{i=1}^{n}\big\vert \mathbb{E}\big[\eta_{n,i}{\bf 1}_{(0,l_{n}]}(\vert \eta_{n,i}\vert )\big]\big\vert \Big\vert \mathbb{E}\Big[\nabla f(S_{n}(i))-\nabla f\big(S_{n}(i)+l_{n}^{-1}\eta_{n,i}\big)\Big]\Big\vert ;
\end{align*}
If $\alpha\in(0,1)$,
\begin{align*}
\mathcal{N}_{2}=l_{n}^{-\frac{1}{\alpha}}\sum_{i=1}^{n}\Big\vert &\mathbb{E}\big[\langle\eta_{n,i},\nabla f(S_{n}(i)+l_{n}^{-\frac{1}{\alpha}}\eta_{n,i})\rangle\big]-\mathbb{E}\big[\langle\eta_{n,i},\nabla f(S_{n}(i)){\bf 1}_{(0,l_{n}^{\frac{1}{\alpha}}]}(\vert \eta_{n,i}\vert )\rangle\big]\\
&-\frac{\alpha^{2}}{d_{\alpha}}l_{n}^{\frac{1-\alpha}{\alpha}}\mathbb{E}\big[(\mathcal{L}^{\alpha,\nu}f)(S_{n}(i))-\frac{d_{\alpha}}{\alpha(1-\alpha)}\int_{\mathbb{S}^{d-1}}\langle\theta,\nabla f(S_{n}(i))\rangle\nu(d\theta)\big]\Big\vert ,
\end{align*}
\begin{align*}
\mathcal{N}_{3}=l_{n}^{-\frac{1}{\alpha}}\sum_{i=1}^{n}\Big\vert \frac{\alpha}{1-\alpha}l_{n}^{\frac{1-\alpha}{\alpha}}&\mathbb{E}\big[\int_{\mathbb{S}^{d-1}}\langle\theta,\nabla f(S_{n}(i))\rangle\nu(d\theta)\big]\\
&-\mathbb{E}\big[\langle\eta_{n,i},\nabla f(S_{n}(i)){\bf 1}_{(0,l_{n}^{\frac{1}{\alpha}}]}(\vert \eta_{n,i}\vert )\rangle\big]\Big\vert .
\end{align*}

1) When $\alpha\in(1,2)$, we have by (\ref{e:fracder})
\begin{align*}
\mathcal{N}_{1}\leq Cl_{n}^{-\frac{2}{\alpha}}\sum_{i=1}^{n}\mathbb{E}\vert \eta_{n,i}\vert ^{2-\alpha}
\end{align*}
and Lemma \ref{2case} implies that
\begin{align*}
\mathcal{N}_{2}\leq C\sum_{i=1}^{n}\left(l_{n}^{-\frac{2}{\alpha}}\int_{0}^{l_{n}^{\frac{1}{\alpha}}}r^{2}\big\vert \frac{\alpha}{r^{\alpha+1}}-p_{\eta_{n,i}}(r)\big\vert dr
+l_{n}^{-\frac{1}{\alpha}}\int_{l_{n}^{\frac{1}{\alpha}}}^{\infty}\big\vert \frac{\alpha}{r^{\alpha}}-rp_{\eta_{n,i}}(r)dr\big\vert \right).
\end{align*}
For the third term, one can derive from (\ref{second order}) that
\begin{align*}
\mathcal{N}_{3}\leq Cl_{n}^{-\frac{2}{\alpha}}\sum_{i=1}^{n}\left(\mathbb{E}\vert \eta_{n,i}\vert \right)^{2}.
\end{align*}

2) When $\alpha=1$ and $\int_{\mathbb{S}^{d-1}}\theta\nu(d\theta)=0,$ we have by Lemma \ref{trunc1}
\begin{align*}
\mathcal{N}_{1}\leq\frac{C}{n}\sum_{i=1}^{n}\Big(l_{n}^{-1}\int_{0}^{l_{n}}r\big(1-\log(l_{n}^{-1}r)\big)p_{\eta_{n,i}}(r)dr+\int_{l_{n}}^{\infty}p_{\eta_{n,i}}(r)dr\Big).
\end{align*}
By Lemma \ref{Lemregu2}, we have
\begin{align*}
\mathcal{N}_{2}\leq Cl_{n}^{-1}\sum_{i=1}^{n}\Big(&l_{n}^{-1}\int_{0}^{l_{n}}r^{2}\big(1-\log(l_{n}^{-1}r)\big)\vert \frac{1}{r^{2}}-p_{\eta_{n,i}}(r)\vert dr\\
&+l_{n}^{1-\beta}\int_{l_{n}}^{\infty}t^{\beta}\big\vert d[\frac{1}{t}-tp_{\eta_{n,i}}(t)]\big\vert \Big).
\end{align*}
In addition, noticing that $\int_{\mathbb{S}^{d-1}}\theta\nu(d\theta)=0,$ we have $\mathcal{N}_{3}=0.$

3) When $\alpha\in(0,1),$ we have by Lemma \ref{trunc1},
\begin{align*}
\mathcal{N}_{1}\leq C\frac{\alpha}{n}\sum_{i=1}^{n}\Big(l_{n}^{-\frac{\alpha+1}{2\alpha}}\int_{0}^{l_{n}^{\frac{1}{\alpha}}}r^{\frac{\alpha+1}{2}}p_{\eta_{n,i}}(r)dr+\int_{l_{n}^{\frac{1}{\alpha}}}^{\infty}p_{\eta_{n,i}}(r)dr\Big).
\end{align*}
By Lemma \ref{Lemregu1}, we have
\begin{align*}
\mathcal{N}_{2}\leq Cl_{n}^{-\frac{1}{\alpha}}\sum_{i=1}^{n}\Big(l_{n}^{-1}\int_{0}^{l_{n}^{\frac{1}{\alpha}}}r^{\alpha+1}\big\vert& \frac{\alpha}{r^{\alpha+1}}-p_{\eta_{n,i}}(r)\big\vert dr\\
&+l_{n}^{\frac{1-\beta}{\alpha}}\int_{l_{n}^{\frac{1}{\alpha}}}^{\infty}t^{\beta}\big\vert d[\frac{\alpha}{t^{\alpha}}-tp_{\eta_{n,i}}(t)]\big\vert \Big).
\end{align*}
In addition, we have
\begin{align*}
\mathbb{E}\big[\langle\eta_{n,i},\nabla f(S_{n}(i)){\bf 1}_{(0,l_{n}^{\frac{1}{\alpha}}]}(\vert \eta_{n,i}\vert )\rangle\big]=\int_{\mathbb{S}^{d-1}}\int_{0}^{l_{n}^{\frac{1}{\alpha}}}\langle r\theta,\nabla f(S_{n}(i))\rangle p_{\eta_{n,i}}(r)dr\nu(d\theta),
\end{align*}
which implies that
\begin{align*}
\mathcal{N}_{3}\leq&l_{n}^{-\frac{1}{\alpha}}\sum_{i=1}^{n}\Big\vert \mathbb{E}\Big[\int_{\mathbb{S}^{d-1}}\langle\theta,\nabla f(S_{n}(i))\rangle\nu(d\theta)\Big]\Big\vert \Big\vert \frac{\alpha}{1-\alpha}l_{n}^{\frac{1-\alpha}{\alpha}}-\int_{0}^{l_{n}^{\frac{1}{\alpha}}}rp_{\eta_{n,i}}(r)dr\Big\vert \\
\leq&\alpha l_{n}^{-\frac{1}{\alpha}}\sum_{i=1}^{n}\Big\vert \int_{\mathbb{S}^{d-1}}\theta\nu(d\theta)\Big\vert \Big\vert \frac{\alpha}{1-\alpha}l_{n}^{\frac{1-\alpha}{\alpha}}-\int_{0}^{l_{n}^{\frac{1}{\alpha}}}rp_{\eta_{n,i}}(r)dr\Big\vert .
\end{align*}
Combining all of above, the desired conclusion follows.
\end{proof}

\section{Example: $\nu-$Paretian distribution}\label{s:example}

%\subsection{Example 1: Approximation of Multidimensional Stable Laws\cite{D-N}}
In \cite{D-N}, Davydov and Nagaev considered the Pareto distribution $\xi$, that is, the density of the random variable $\xi$ is
\begin{equation}\label{Pareto distribution}
p_{\xi}(u)=
\begin{cases}
\alpha u^{-1-\alpha} \qquad if\quad u\geq1,\\
0 \qquad\qquad\quad if \quad u<1.
\end{cases}
\end{equation}
It is convenient to adhere the following definition.
\bd\label{pd}
We call a distribution $\nu-$Paretian if it corresponds to a random vector $\tau$ admitting the representation $\xi\varepsilon,$ where $\xi$ and $\varepsilon$ are independent, $\xi$ has the density (\ref{Pareto distribution}) while $\varepsilon$ is a random unit vector satisfying
 \begin{equation}\label{d-1 distribution}
P(\varepsilon\in E)=\nu(E),
\end{equation}
where $E\in\mathcal{B}_{\mathbb{S}^{d-1}}$, the Borel sets of $\mathbb{S}^{d-1}.$
\ed
In \cite{D-N}, the authors assumed that $\nu$ is symmetric and
$$
m_{\nu}=\min_{e\in\mathbb{S}^{d-1}}\Sigma_{\alpha}(e,\nu)>0,
$$
where $\Sigma_{\alpha}(e,\nu)=\int_{\mathbb{S}^{d-1}}\vert \langle e, \theta\rangle\vert ^{\alpha}\nu(\dif \theta).$ That means the $v-$Paretian distribution is strictly $d$-dimensional. Consider a sequence of i.i.d. random vectors such that
$$
\tau_{i}=^{d}\tau, \qquad i=1,2,\cdots .
$$
Set
\begin{align}\label{sum}
T_{n}=n^{-1/\alpha}\sum_{i=1}^{n}\tau_{i}.
\end{align}
Then, \cite{D-N} gave the following approximation of multidimensional stable law:
\bt\label{pthm}
\cite[Theorem 3.2]{D-N} Let $T_{n}$ be defined by (\ref{sum}). If the underlying distribution is $\nu-$Paretian then as $n\rightarrow\infty$
$$
\sup_{A\in \mathcal{B}(\mathbb{R}^{d})}\vert \mathbb{P}(T\in A)-\mathbb{P}(T_{n}\in A)\vert =\mathbf{O}(n^{-\delta}),
$$
where $\delta=\frac{\min(\alpha,2-\alpha)}{d+\alpha}$ and $T$ has the stable distribution determined by the characteristic function $E\e^{i\langle \lambda,T\rangle}=\exp(-\frac{\alpha}{d_{\alpha}}\vert \lambda\vert ^{\alpha}\Sigma_{\alpha}(\e_{\lambda}, \nu)),$ $\lambda\in\mathbb{R}^{d},d\geq1.$
\et

According to Theorem \ref{t:bound}, here we can consider the more general $\nu$ (see the assumptions in Lemma \ref{l:heatkernel}) and obtain a better convergence rate in Wasserstein(-type) distance.

\begin{theorem}
Keep the same assumptions as in Lemma \ref{l:heatkernel}. Set
$$
\zeta_{n,i}=\big(\frac{\alpha}{d_{\alpha}}\big)^{-\frac{1}{\alpha}}\frac{\tau_{i}}{n^{\frac{1}{\alpha}}}$$
and
\begin{equation*}
S_{n}=
\begin{cases}
\zeta_{n,1}-\mathbb{E}\zeta_{n,1}+\cdots+\zeta_{n,n}-\mathbb{E}\zeta_{n,n},\quad &\alpha\in(1,2),\\
\zeta_{n,1}-\mathbb{E}\zeta_{n,1}{\bf 1}_{(0,1]}(\vert \zeta_{n,1}\vert )+\cdots+\zeta_{n,n}-\mathbb{E}\zeta_{n,n}{\bf 1}_{(0,1]}(\vert \zeta_{n,n}\vert ), &\alpha=1,\\
\zeta_{n,1}+\zeta_{n,2}+\cdots+\zeta_{n,n},\quad &\alpha\in(0,1).
\end{cases}
\end{equation*}
Then,
\begin{align*}
d_{W}\big(\mathcal{L}(S_{n}),\pi\big)\leq Cn^{\frac{\alpha-2}{\alpha}}, \quad \alpha\in(1,2),
\end{align*}
and for any $\beta\in(0,\alpha)$,
\begin{align*}
d_{W_{\beta}}\big(\mathcal{L}(S_{n}),\mu\big)\leq C
\begin{cases}
n^{-1}(\log n)^{2},\quad &\alpha=1,\\
n^{-1}+\Big\vert \int_{\mathbb{S}^{d-1}}\theta\nu(d\theta)\Big\vert n^{\frac{\alpha-1}{\alpha}},\quad &\alpha\in(\frac{1}{2},1),
\end{cases}
\end{align*}
\begin{align*}
\sup_{h\in\mathcal{H}_{\beta}\cap\mathcal{F}_{\beta}}\big\vert \mathbb{E}h(S_{n})-\mu(h)\big\vert \leq C\Big(n^{-1}+\Big\vert \int_{\mathbb{S}^{d-1}}\theta\nu(d\theta)\Big\vert n^{\frac{\alpha-1}{\alpha}}\Big), \quad \alpha\in(0,\frac{1}{2}].
\end{align*}
\end{theorem}

\begin{proof}
By definition \ref{pd}, we obtain
\begin{align*}
p_{\tau_{i}}(r)dr\nu(d\theta)=
\begin{cases}
\frac{\alpha}{r^{\alpha+1}}dr\nu(d\theta),\quad &r\geq1,\\
\quad 0,\qquad\qquad &r<1.
\end{cases}
\end{align*}
Let $\zeta_{n,i}=l_{n}^{-1/\alpha}\tau_{i}$ and $\eta_{n,i}=l_{n}^{1/\alpha}\zeta_{n,i}=\tau_{i},$ it follows that
\begin{align*}
p_{\eta_{n,i}}(r)dr\nu(d\theta)=
\begin{cases}
\frac{\alpha}{r^{\alpha+1}}dr\nu(d\theta),\quad &r\geq1,\\
\quad 0,\qquad\qquad &r<1.
\end{cases}
\end{align*}
According to Theorem \ref{t:bound},\\
i) When $\alpha\in(1,2)$, we have
\begin{align*}
n^{-\frac{2}{\alpha}}\mathbb{E}\vert \eta_{n,i}\vert ^{2-\alpha}+n^{-\frac{2}{\alpha}}\big(\mathbb{E}\vert \eta_{n,i}\vert \big)^{2}
=n^{-\frac{2}{\alpha}}\mathbb{E}\vert \tau_{i}\vert ^{2-\alpha}+n^{-\frac{2}{\alpha}}\big(\mathbb{E}\vert \tau_{i}\vert \big)^{2}\leq Cn^{-\frac{2}{\alpha}},
\end{align*}
\begin{align*}
n^{-\frac{2}{\alpha}}\int_{0}^{l_{n}^{\frac{1}{\alpha}}}r^{2}\big\vert \frac{\alpha}{r^{\alpha+1}}-p_{\eta_{n,i}}(r)\big\vert dr
=n^{-\frac{2}{\alpha}}\int_{0}^{1}\frac{\alpha}{r^{\alpha-1}}dr=\frac{\alpha}{2-\alpha}n^{-\frac{2}{\alpha}}
\end{align*}
and
\begin{align*}
n^{-\frac{1}{\alpha}}\int_{l_{n}^{\frac{1}{\alpha}}}^{\infty}\big\vert \frac{\alpha}{r^{\alpha}}-rp_{\eta_{n,i}}(r)dr\big\vert =0.
\end{align*}
These inequalities imply $d_{W}\big(\mathcal{L}(S_{n}),\pi\big)\leq n^{\frac{\alpha-2}{\alpha}}$.\\
ii) When $\alpha=1$ and $\int_{\mathbb{S}^{d-1}}\theta\nu(d\theta)=0,$ we have
\begin{align*}
n^{-2}\int_{0}^{l_{n}}r\big(1-\log(l_{n}^{-1}r)\big)p_{\eta_{n,i}}(r)dr\leq Cn^{-2}\big(1+\log n+(\log n)^{2}\big),
\end{align*}
\begin{align*}
n^{-\beta}\int_{l_{n}}^{\infty}t^{\beta}\big\vert d[\frac{1}{t}-tp_{\eta_{n,i}}(t)]\big\vert =0,\quad n^{-1}\int_{l_{n}}^{\infty}p_{\eta_{n,i}}(r)dr=n^{-2},
\end{align*}
and
\begin{align*}
n^{-2}\int_{0}^{l_{n}}r^{2}\big(2-\log(l_{n}^{-1}r)\big)\vert \frac{1}{r^{2}}-p_{\eta_{n,i}}(r)\vert dr\leq Cn^{-2}(1+\log n).
\end{align*}
Hence, we have
\begin{align*}
d_{W_{\beta}}\big(\mathcal{L}(S_{n}),\mu\big)\leq Cn^{-1}(\log n)^{2}.
\end{align*}
(iii) When $\alpha\in(0,1),$ we have
\begin{align*}
n^{-\frac{3\alpha+1}{2\alpha}}\int_{0}^{l_{n}^{\frac{1}{\alpha}}}r^{\frac{\alpha+1}{2}}p_{\eta_{n,i}}(r)dr+n^{-1}\int_{l_{n}^{\frac{1}{\alpha}}}^{\infty}p_{\eta_{n,i}}(r)dr\leq Cn^{-2},
\end{align*}
\begin{align*}
n^{-\frac{\beta}{\alpha}}\int_{l_{n}^{\frac{1}{\alpha}}}^{\infty}t^{\beta}\big\vert d[\frac{\alpha}{t^{\alpha}}-tp_{\eta_{n,i}}(t)]\big\vert =0,
\end{align*}
\begin{align*}
n^{-\frac{1+\alpha}{\alpha}}\int_{0}^{l_{n}^{\frac{1}{\alpha}}}r^{\alpha+1}\big\vert \frac{\alpha}{r^{\alpha+1}}-p_{\eta_{n,i}}(r)\big\vert dr=\alpha n^{-\frac{1+\alpha}{\alpha}},
\end{align*}
and
\begin{align*}
\mathcal{R}_{n,\alpha,i}\leq\frac{\alpha}{1-\alpha}\Big\vert \int_{\mathbb{S}^{d-1}}\theta\nu(d\theta)\Big\vert n^{-\frac{1}{\alpha}}.
\end{align*}
Therefore, one can derive that
\begin{align*}
\sup_{h\in\mathcal{H}_{\beta}\cap\mathcal{F}_{\beta}}\big\vert \mathbb{E}h(S_{n})-\mu(h)\big\vert \leq C\Big(n^{-1}+\Big\vert \int_{\mathbb{S}^{d-1}}\theta\nu(d\theta)\Big\vert n^{\frac{\alpha-1}{\alpha}}\Big), \quad \alpha\in(0,\frac{1}{2}]
\end{align*}
and
\begin{align*}
d_{W}\big(\mathcal{L}(S_{n}),\mu\big)\leq C\Big(n^{-1}+\Big\vert \int_{\mathbb{S}^{d-1}}\theta\nu(d\theta)\Big\vert n^{\frac{\alpha-1}{\alpha}}\Big), \quad \alpha\in(\frac{1}{2},1).
\end{align*}
The proof is complete.
\end{proof}

\br
Let us compare our result with the known results in literatures. When the spectral measure $\nu$ is symmetric, the authors of \cite{D-N} obtained a rate $n^{-\frac{\alpha}{d+\alpha}}$ for $d$ dimensional stable law in total variation distance and conjectured that the rate can be improved to $n^{-\min\{1,\frac{2-\alpha}{\alpha}\}}$ in $L^{1}$ or total variation distance. Our results gives a positive answer to their conjecture for the Wasserstein(-type) distance.
\er

\begin{appendices}

\section{Some auxiliary estimates}\label{amoment}

\subsection{Moment estimate}

\begin{lemma}\label{moment}
Let $(Z_{t})_{t\geq0}$ be the strictly $\alpha$-stable L$\acute{e}$vy process with characteristic function $\widehat{\pi}$, which is defined in \eqref{e:fourierpi}. Suppose that the assumption of Lemma \ref{l:heatkernel} holds. Then for any $\beta\in(0,\alpha)$, there exists a constant $C>0$ such that
\begin{align*}
\mathbb{E}\vert Z_{1}\vert ^{\beta}<C.
\end{align*}
\end{lemma}

\begin{proof}
When $d=\gamma$, for any $\beta\in(0,\alpha)$, \eqref{est:p} implies
\begin{align*}
\mathbb{E}\vert Z_{1}\vert ^{\beta}=\int_{\mathbb{R}^{d}}\vert x\vert ^{\beta}p(x)dx\leq& \int_{\mathbb{R}^{d}}\frac{\vert x\vert ^{\beta}}{(1+\vert x\vert )^{\alpha+d}}dx\\
=&\int_{\mathbb{S}^{d-1}}\int_{0}^{1}\frac{r^{\beta}r^{d-1}}{(1+r)^{\alpha+d}}drd\theta+\int_{\mathbb{S}^{d-1}}\int_{1}^{\infty}\frac{r^{\beta}r^{d-1}}{(1+r)^{\alpha+d}}drd\theta\\
\leq&C\left(\int_{0}^{1}r^{\beta+d-1}dr+\int_{1}^{\infty}r^{\beta-\alpha-1}dr\right)\leq C.
\end{align*}
For the general case, since
\begin{align*}
\int_{\mathbb{S}^{d-1}}\int_{1}^{\infty}\frac{(r\vee1)^{\beta}}{r^{\alpha+1}}dtd\theta=\int_{1}^{\infty}\frac{r^{\beta}}{r^{\alpha+1}}dr=\frac{1}{\alpha-\beta},
\end{align*}
according to \cite[Theorem 25.3, Proposition 25.4 (ii) and (iii)]{S}, there exists a constants $C>0$ such that
\begin{align*}
\mathbb{E}\left(\vert Z_{1}\vert \vee1\right)^{\beta}\leq C.
\end{align*}
Hence, the desired result follow from the fact $\vert x\vert ^{\beta}\leq(\vert x\vert \vee1)^{\beta}$ for any $x\in\mathbb{R}^{d}$.
\end{proof}

\subsection{Heat Kernel Estimates of Rotationally Invariant $\alpha$-stable L$\acute{e}$vy process}

Let $p(t,x)$ be the transition probability density of rotationally invariant $\alpha$-stable process $Z_{t},$ which has characteristic function $\e^{-t\vert \lambda\vert ^{\alpha}}.$ It is well known that
\begin{align*}
p(t,x)=t^{-d/\alpha}p(1,t^{-1/\alpha}x),\qquad t>0,\quad x\in\mathbb{R}^{d}.
\end{align*}
Then, we have the following estimates:
\bl\label{abounds}
Let $p(x)$ be the probability density of $Z_{1},$ we have
\begin{align*}
p(x)\leq 2^{-d+1}\pi^{-\frac{d}{2}}\frac{\Gamma(d/\alpha)}{\alpha\Gamma(d/2)},\qquad p(x)\leq\frac{\alpha2^{\alpha-1}\sin\frac{\alpha\pi}{2}\Gamma((d+\alpha)/2)\Gamma(\alpha/2)}{\pi^{d/2+1}\vert x\vert ^{d+\alpha}},
\end{align*}
\begin{align}\label{estimate2}
\vert \nabla p(x)\vert \leq 2\pi\vert x\vert p_{(d+2)}(\tilde{x})
\end{align}
and
\begin{align}\label{estimate3}
\| \nabla^{2}p(x)\| _{{\rm op}}\leq2\pi p_{(d+2)}(1,\tilde{x})+4\pi^{2}\vert x\vert ^{2}p_{(d+4)}(\hat{x}),
\end{align}
where $\tilde{x}\in\mathbb{R}^{d+2}$ satisfies $\vert \tilde{x}\vert =\vert x\vert $, $p_{d+2}(\tilde{x})$ is the density of the rotationally invariant $\alpha$-stable process $Z_{1}$ in dimension $d+2$, $\hat{x}\in\mathbb{R}^{d+4}$ satisfies $\vert \hat{x}\vert =\vert x\vert $ and $p_{d+4}(\hat{x})$ is the density of the rotationally invariant $\alpha$-stable process $Z_{1}$ in dimension $d+4.$
\el

\begin{proof}
By the \cite[Proposition 2.3 (XII)]{S}, we have
\begin{align*}
p(x)=(2\pi)^{-d}\int_{\mathbb{R}^{d}}\e^{i\langle x,\lambda\rangle}\e^{-\vert \lambda\vert ^{\alpha}}\dif t=(2\pi)^{-d}\int_{\mathbb{R}^{d}}\cos(\langle x,\lambda\rangle)\e^{-\vert \lambda\vert ^{\alpha}}\dif \lambda,
\end{align*}
since $\vert \cos(\langle x,\lambda\rangle)\vert \leq1,$ we have
\begin{align*}
p(x)&\leq(2\pi)^{-d}\int_{\mathbb{R}^{d}}\e^{-\vert \lambda\vert ^{\alpha}}\dif \lambda\\
&=(2\pi)^{-d}\int_{\mathbb{S}^{d-1}}\int_{0}^{\infty}r^{d-1}\e^{-r^{\alpha}}\dif r\dif \theta\\
&=(2\pi)^{-d}V(\mathbb{S}^{d-1})\int_{0}^{\infty}r^{d-1}\e^{-r^{\alpha}}\dif r\\
&=(2\pi)^{-d}V(\mathbb{S}^{d-1})\frac{1}{\alpha}\int_{0}^{\infty}y^{\frac{d}{\alpha}-1}\e^{-y}\dif y\\
&=(2\pi)^{-d}V(\mathbb{S}^{d-1})\frac{\Gamma(d/\alpha)}{\alpha},
\end{align*}
where the last second equality is by taking $y=r^{\alpha}.$ Recall $V(\mathbb{S}^{d-1})=\frac{2\pi^{d/2}}{\Gamma(d/2)}$, we have
\begin{align*}
p(x)\leq2^{-d+1}\pi^{-\frac{d}{2}}\frac{\Gamma(d/\alpha)}{\alpha\Gamma(d/2)}.
\end{align*}
Using the Fourier inversion theorem for radial functions \cite[(2.1)]{B-G}, we have
\begin{align*}
p(x)=(2\pi)^{-\frac{d}{2}}\vert x\vert ^{-\frac{d}{2}+1}\int_{0}^{\infty}\e^{-t^{\alpha}}t^{\frac{d}{2}}J_{(d-2)/2}(\vert x\vert t)\dif t,
\end{align*}
where $J_{m}$ is the Bessel function of first kind of order $m.$ Then, let $r=\vert x\vert t$ in the above integral term, we have
\begin{align*}
p(x)=(2\pi)^{-\frac{d}{2}}\vert x\vert ^{-d}\int_{0}^{\infty}\e^{-(\frac{r}{\vert x\vert })^{\alpha}}r^{\frac{d}{2}}J_{(d-2)/2}(r)\dif r
\end{align*}
From \cite[section 7.2.8 (50)]{B-E}, we have $\frac{\partial}{\partial t}(t^{m}J_{m}(t))=t^{m}J_{m-1}(t).$ Hence, we use integration by parts
\begin{align*}
p(x)&=\alpha(2\pi)^{-\frac{d}{2}}\vert x\vert ^{-d-\alpha}\int_{0}^{\infty}\e^{-(\frac{r}{\vert x\vert })^{\alpha}}r^{\frac{d}{2}+\alpha-1}J_{d/2}(r)\dif r\\
&\leq\alpha(2\pi)^{-\frac{d}{2}}\vert x\vert ^{-d-\alpha}\int_{0}^{\infty}r^{\frac{d}{2}+\alpha-1}J_{d/2}(r)\dif r\\
&=\frac{\alpha2^{\alpha-1}\sin\frac{\alpha\pi}{2}\Gamma((d+\alpha)/2)\Gamma(\alpha/2)}{\pi^{d/2+1}\vert x\vert ^{d+\alpha}},
\end{align*}
where the last equality comes from the proof of \cite[Theorem 2.1]{B-G}.

Furthermore, from \cite[(11)]{B-J}, we have $\nabla p(x)=-2\pi x\hat{p}_{(d+2)}(\tilde{x})$, so
\begin{align*}
\vert \nabla p(x)\vert =2\pi\vert x\vert p_{(d+2)}(\tilde{x}),
\end{align*}
and by the same argument as the proof of \cite[(11)]{B-J}, we can also obtain
\begin{align*}
\| \nabla^{2}p(x)\| _{{\rm op}}\leq2\pi p_{(d+2)}(\tilde{x})+4\pi^{2}\vert x\vert ^{2}p_{(d+4)}(\hat{x}),
\end{align*}
the desired conclusions follow.
\end{proof}

\section{Proof of Proposition \ref{p:solution}}\label{a:solve_Stein}

\begin{lemma}\label{A1}
Let $(Q_{t})_{t\geq0}$ be a Markovian semigroup with transition density  $q(t,x,y)=p_{1-e^{-t}}(y-e^{-\frac{t}{\alpha}}x)$.  Then for any $h\in Lip(1)$ in the case $\alpha>1$ and $h\in\mathcal{H}_{\beta}$ for some $\beta<\alpha$ in the case $\alpha\leq1$, we have
\begin{align*}
\partial_{t} Q_{t}h(x)=\mathcal{A}^{\alpha,\nu}Q_{t}h(x).
\end{align*}
\end{lemma}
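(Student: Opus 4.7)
My plan is to prove this backward Kolmogorov identity in two phases: first, show that $Q_t h$ lies in the domain of $\mathcal A$ for each fixed $t > 0$; second, invoke the Markov semigroup property together with It\^o's formula to identify $\partial_t Q_t h$ with $\mathcal A Q_t h$.

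For the regularity phase, I would differentiate the explicit representation $Q_t h(x) = \int p_{s(t)}(y-e^{-t/\alpha}x)h(y)\,dy$ under the integral sign. After shifting variables to $z = y - e^{-t/\alpha}x$ and subtracting the null quantity $\int \nabla p_{s(t)}(z)\,dz = 0$, one represents $\nabla Q_t h(x)$ as an integral of $\nabla p_{s(t)}(z)$ against the Lipschitz increment $h(z+e^{-t/\alpha}x) - h(e^{-t/\alpha}x)$. Combined with the self-similarity $p_s(z) = s^{-d/\alpha}p(s^{-1/\alpha}z)$ and the tail decay of $\nabla p$ from Lemma \ref{gl} (or its analogues in Lemmas \ref{g2}, \ref{g3}, patched together via the convolution identity \eqref{total1}), this yields a finite bound because $\alpha > 1$. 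The same scheme applied to the second derivative, now using \eqref{tg}, \eqref{tg2}, \eqref{tg4} and \eqref{total3}, shows $\|\nabla^2 Q_t h\|_\infty < \infty$. These bounds make the L\'evy integral defining $\mathcal L^\alpha Q_t h$ absolutely convergent (the small-jump contribution is controlled by the Hessian bound and the large-jump one by the gradient bound), so that $Q_t h \in \mathrm{Dom}(\mathcal A)$.

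For the identification phase, I would use the Markov semigroup property $Q_{t+s} = Q_s Q_t$, which follows from the Markov property of $(X_t^x)$, and apply It\^o's formula for L\'evy-driven SDEs to $Q_t h(X^x_u)$ on $u\in[0,s]$. Taking expectations kills the martingale part (justified by the gradient/Hessian bounds of the previous step) and gives
\begin{equation*}
Q_{t+s}h(x) - Q_t h(x) = \int_0^s Q_u(\mathcal A Q_t h)(x)\,du.
\end{equation*}
Dividing by $s$, letting $s \downarrow 0$, and using the continuity of $u \mapsto Q_u(\mathcal A Q_t h)(x)$ at $u = 0$ (Feller continuity of the OU-type semigroup together with the regularity of $\mathcal A Q_t h$) delivers $\partial_t Q_t h(x) = \mathcal A Q_t h(x)$.

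The main obstacle is the regularity step: because $h$ is only Lipschitz with at most linear growth, no supremum bound on $Q_t h$ itself is available, and the finiteness of $\|\nabla Q_t h\|_\infty$ and $\|\nabla^2 Q_t h\|_\infty$ must genuinely exploit the fine tail estimates for $\nabla p$ and $\nabla^2 p$ from Section \ref{s:zooofspectral}. The handling of the full mixed spectral measure \eqref{total} forces all bounds to be tracked through the convolution identities \eqref{total1}--\eqref{total3} in Remark \ref{remark}.
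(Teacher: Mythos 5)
Your proposal is correct but takes a genuinely different route from the paper. The paper works directly at the level of the transition density: it asserts $\partial_t q(t,x,y) = \mathcal{A}q(t,x,y)$ (the backward Kolmogorov equation for the density, citing the operator formula \eqref{e:operator_OU} and the SDE \eqref{e:OU}), then establishes a pointwise Lipschitz-in-$t$ bound on $q$ using the decay estimates for $p$ and $\nabla p$, and finally passes $\partial_t$ through the integral $\int q(t,x,y)h(y)\,dy$ by dominated convergence and Fubini. You instead work at the level of $Q_t h$: first you show $Q_t h \in \mathrm{Dom}(\mathcal A)$ by proving $\|\nabla Q_t h\|_\infty, \|\nabla^2 Q_t h\|_\infty < \infty$ via the same density estimates, and then you derive the Kolmogorov identity probabilistically from It\^o's formula and the Chapman--Kolmogorov relation $Q_{t+s} = Q_s Q_t$. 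The paper's route is shorter once one grants the density-level PDE; yours is more self-contained in that it derives the Kolmogorov equation rather than citing it, but at the cost of additional technicalities that you gloss over. Two of those are worth flagging. First, to kill the jump martingale in It\^o's formula you need $\mathbb{E}\int_0^s\int |Q_t h(X^x_{u-}+y) - Q_t h(X^x_{u-})|\,\nu(dy)\,du < \infty$; with $\alpha\in(1,2)$ this follows from $|Q_t h(x+y)-Q_t h(x)| \lesssim \min(|y|,|y|^2)$, which does indeed use both of your gradient and Hessian bounds, so your indication that they are needed is accurate, but the integrability should be spelled out. Second, the argument $(Q_{t+s}h - Q_t h)/s = s^{-1}\int_0^s Q_u(\mathcal{A}Q_t h)\,du \to \mathcal{A}Q_t h$ only yields the right derivative; to conclude two-sided differentiability you must either repeat the argument on $[t-s,t]$ (which requires continuity of $s\mapsto \mathcal{A}Q_{t-s}h$ at $s=0$) or invoke the standard fact that a continuous function with continuous right derivative is differentiable. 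Neither point is a fatal gap, but they should appear in a complete write-up. Also worth noting: your gradient bound on $Q_t h$ can be obtained more simply than you describe, since $Q_t h(x) = \int p_1(y)h((1-e^{-t})^{1/\alpha}y + e^{-t/\alpha}x)\,dy$ differentiates directly to give $\|\nabla Q_t h\|_\infty \le e^{-t/\alpha}\|\nabla h\|_\infty$ without any integration by parts or use of $\nabla p$; it is only the Hessian bound that genuinely needs the density estimates.
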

\begin{proof}
Recall that $q(t,x,y)=p_{1-e^{-t}}(y-e^{-\frac{t}{\alpha}}x)$ and $s(t)=1-e^{-t}.$ Then
\begin{align*}
  \left\vert \frac{\partial}{\partial t}q(t,x,y)\right\vert  &= \left\vert e^{-t}\frac{\partial}{\partial s(t)}p_{1-e^{-t}}(y-e^{-\frac{t}{\alpha}}x)+\alpha^{-1}e^{-\frac{t}{\alpha}}x\frac{\partial}{\partial y}p_{1-e^{-t}}(y-e^{-\frac{t}{\alpha}}x)\right\vert \\
   & \le \frac{C(1-e^{-t})^{(\gamma-d)/\alpha}}{((1-e^{-t})^{1/\alpha}+\vert y-e^{-\frac{t}{\alpha}}x\vert )^{\alpha+\ga}}\\
   &\quad+\frac{\vert x\vert }{\alpha}e^{-\frac{t}{\alpha}}\frac{C(1-e^{-t})^{(\gamma-d)/\alpha}}{((1-e^{-t})^{1/\alpha}+\vert y-e^{-\frac{t}{\alpha}}x\vert )^{\alpha+\ga}}\\
   &\le \frac{C(1+\frac{\vert x\vert }{\alpha}e^{-\frac{t}{\alpha}})(1-e^{-t})^{(\gamma-d)/\alpha}}{((1-e^{-t})^{1/\alpha}+\vert y-e^{-\frac{t}{\alpha}}x\vert )^{\alpha+\ga}},
\end{align*}
where the first inequality above follows from \cite[Theorem 1.2]{B-S-3}.
Thus, for $t>0$, $s>0$ small enough such that $(1-e^{-s/\alpha})\vert x\vert \le \frac{1}{2}(e^t-1)^{1/\alpha},$
\begin{align*}
  \vert q(t+s,x,y)-q(t,x,y)\vert  & \le s \frac{C(1+\frac{\vert x\vert }{\alpha}e^{-\frac{t}{\alpha}})(1-e^{-t})^{(\gamma-d)/\alpha}}{((1-e^{-t})^{1/\alpha}+\vert y-e^{-\frac{t}{\alpha}}x\vert )^{\alpha+\ga}}.
\end{align*}
In addition, according to (\ref{e:operator_OU}) and (\ref{e:OU}), we have
\begin{align}\label{heat}
\partial_{t}q(t,x,y)=\mathcal{A}^{\alpha,\nu}q(t,x,y).
\end{align}
Hence, since $\alpha+\gamma>d$, using dominated convergence theorem, (\ref{heat}) and Fubini's theorem, we have
\begin{align*}
\partial_{t} Q_{t}h(x)&=\partial_{t}\int_{\mathbb{R}^{d}}q(t,x,y)h(y)dy=\int_{\mathbb{R}^{d}}\partial_{t}q(t,x,y)h(y)dy\\
&=\int_{\mathbb{R}^{d}}\mathcal{A}^{\alpha,\nu}q(t,x,y)h(y)dy=\mathcal{A}^{\alpha,\nu}\int_{\mathbb{R}^{d}}q(t,x,y)h(y)dy=\mathcal{A}^{\alpha,\nu}Q_{t}h(x),
\end{align*}
the desired conclusion follows.
\end{proof}
\noindent

{\it Proof of Proposition \ref{p:solution}}.
From Remark \ref{rem} (i), we know that $f$ is well defined. Observing
\begin{align}\label{rep}
Q_{t}h(x)=\int_{\mathbb{R}^{d}}p_{1-e^{-t}}(y-e^{-\frac{t}{\alpha}}x)h(y)dy=\int_{\mathbb{R}^{d}}p_{1}(y)h\big((1-e^{-t})^{\frac{1}{\alpha}}y+e^{-\frac{t}{\alpha}}x\big)dy.
\end{align}
When $\alpha\in(1,2)$, since $h\in\mathrm{Lip}_1$
\begin{align*}
\Big\vert h\big((1-e^{-t})^{\frac{1}{\alpha}}y+e^{-\frac{t}{\alpha}}(x+z)\big)-h\big((1-e^{-t})^{\frac{1}{\alpha}}y+e^{-\frac{t}{\alpha}}x\big)\Big\vert \leq e^{-\frac{t}{\alpha}}\vert z\vert ,
\end{align*}
which immediately implies
\begin{align}\label{ineq2}
\Big\vert Q_{t}h(x+z)-Q_{t}h(x)\Big\vert \leq \int_{\mathbb{R}^{d}}p_{1}(y)e^{-\frac{t}{\alpha}}\vert z\vert dy=e^{-\frac{t}{\alpha}}\vert z\vert .
\end{align}
Recall $\mathcal{A}^{\alpha,\nu}f(x)=\mathcal{L}^{\alpha,\nu}f(x)-\frac{1}{\alpha}\langle x,\nabla f(x)\rangle.$
By \eqref{ineq2}, using the dominated convergence theorem, we get that
$$\nabla f(x)=-\int_0^\infty \nabla Q_th(x)\,dt.$$
Furthermore, we have by (\ref{fg4})
\begin{align*}
\big\vert \nabla_{x} p_{1-e^{-t}}(y-e^{-\frac{t}{\alpha}}x)\big\vert =&\big\vert e^{-\frac{t}{\alpha}}\nabla_{y} p_{1-e^{-t},\beta}(y-e^{-\frac{t}{\alpha}}x)\big\vert\\ \leq&\frac{C(e^t-1)^{-1/\alpha}(1-e^{-t})}{((1-e^{-t})^{1/\alpha}+\vert y-e^{-\frac{t}{\alpha}}x\vert )^{\alpha+\ga}},
\end{align*}
then for $x\in\mathbb{R}^{d},$ $z\in\mathbb{R}^{d}$ such that $\vert z\vert \leq\frac{1}{2}(e^{t}-1)^{\frac{1}{\alpha}},$
\begin{align*}
&\big\vert p_{1-e^{-t}}\big(y-e^{-\frac{t}{\alpha}}(x+z)\big)-p_{1-e^{-t}}(y-e^{-\frac{t}{\alpha}}x)\big\vert\\
\leq&\vert z\vert \frac{C2^{d+\alpha}(e^t-1)^{-1/\alpha}(1-e^{-t})}{((1-e^{-t})^{1/\alpha}+\vert y-e^{-\frac{t}{\alpha}}x\vert )^{\alpha+\ga}}.
\end{align*}
Hence, one can derive from the dominated convergence theorem and integration by parts that
\begin{align*}
\left\vert \nabla_{x}Q_{t}\big(h(x)-\pi(h)\big)\right\vert &=\left\vert \int_{\mathbb{R}^{d}}\nabla_{x}p_{1-e^{-t}}(y-e^{-\frac{t}{\alpha}}x)\big(h(y)-\pi(h)\big)dy\right\vert \nonumber\\
&=\left\vert e^{-\frac{t}{\alpha}}\int_{\mathbb{R}^{d}}\nabla_{y}p_{1-e^{-t}}(y-e^{-\frac{t}{\alpha}}x)\big(h(y)-\pi(h)\big)dy\right\vert \nonumber\\
&=\left\vert e^{-\frac{t}{\alpha}}\int_{\mathbb{R}^{d}}p_{1-e^{-t}}(y-e^{-\frac{t}{\alpha}}x)\nabla h(y)dy\right\vert \leq\| \nabla h\| _{\infty}e^{-\frac{t}{\alpha}},
\end{align*}
and similarly by \eqref{fg4}
\begin{align*}
\left\vert \nabla^{2}_{x}Q_{t}\big(h(x)-\pi(h)\big)\right\vert =&\left\vert e^{-\frac{2t}{\alpha}}\int_{\mathbb{R}^{d}}\nabla_{y}p_{(1-e^{-t})^{\frac{1}{\alpha}}}(y-e^{-\frac{t}{\alpha}}x)\cdot\nabla h(y)^{T})dy\right\vert \\
\leq&(1-e^{-t})^{-\frac{1}{\alpha}}e^{-\frac{2t}{\alpha}}\leq t^{-\frac{1}{\alpha}}e^{-\frac{t}{\alpha}}.
\end{align*}
These imply
\begin{align*}
&\big\vert \mathcal{L}^{\alpha,\nu}f(x)\big\vert\\ 
\leq&d_{\alpha}\int_{\mathbb{S}^{d-1}}\int_{0}^{\infty}\int_{0}^{\infty}\frac{\big\vert Q_{t}h(x+r\theta)-Q_{t}h(x)-\langle r\theta,\nabla Q_{t}h(x)\rangle\big\vert }{r^{\alpha+1}}dtdr\nu(d\theta)\\
\leq&d_{\alpha}\int_{\mathbb{S}^{d-1}}\int_{0}^{1}\int_{0}^{\infty}\int_{0}^{1}\int_{0}^{1}\frac{sr^{2}\big\vert \nabla^{2}Q_{t}h(x+sur\theta)\big\vert }{r^{\alpha+1}}dudsdtdr\nu(d\theta)\\
&+d_{\alpha}\int_{\mathbb{S}^{d-1}}\int_{1}^{\infty}\int_{0}^{\infty}\int_{0}^{1}\frac{r\big\vert \nabla Q_{t}h(x+sr\theta)-\nabla Q_{t}h(x)\big\vert }{r^{\alpha+1}}dsdtdr\nu(d\theta)\leq C.
\end{align*}

When $\alpha\in(0,1]$, since $h\in\mathcal H_\beta$
\begin{align*}
\Big\vert h\big((1-e^{-t})^{\frac{1}{\alpha}}y+e^{-\frac{t}{\alpha}}(x+z)\big)-h\big((1-e^{-t})^{\frac{1}{\alpha}}y+e^{-\frac{t}{\alpha}}x\big)\Big\vert \leq e^{-\frac{\beta t}{\alpha}}(\vert z\vert \wedge\vert z\vert ^{\beta}).
\end{align*}
By (\ref{rep}), we immediately have
\begin{align}\label{ineq}
\Big\vert Q_{t}h(x+z)-Q_{t}h(x)\Big\vert \leq \int_{\mathbb{R}^{d}}p(1,y)e^{-\frac{\beta t}{\alpha}}(\vert z\vert \wedge\vert z\vert ^{\beta})dy=e^{-\frac{\beta t}{\alpha}}(\vert z\vert \wedge\vert z\vert ^{\beta}).
\end{align}
Recall $\mathcal{A}^{\alpha,\nu}f(x)=\mathcal{L}^{\alpha,\nu}f(x)-\frac{1}{\alpha}\langle x,\nabla f(x)\rangle.$
By \eqref{ineq}, using the dominated convergence theorem, we get that
$$\nabla f(x)=-\int_0^\infty\nabla Q_th(x)\,dt.$$
Furthermore, if $\alpha=1$, we have
\begin{align*}
&\mathcal{L}^{1,\nu}f(x)\\
=&-d_{1}\int_{\mathbb{S}^{d-1}}\!\int_{0}^{\infty}\!\!\!\int_{0}^{\infty}\!\frac{Q_{t}h(x+r\theta)-Q_{t}h(x)-\langle r\theta,\nabla Q_{t}h(x){\bf 1}_{(0,1)}(r)\rangle}{r^{2}}dtdr\nu(d\theta)\\
=&-d_{1}\int_{\mathbb{S}^{d-1}}\int_{0}^{1}\int_{0}^{\infty}\int_{0}^{1}\frac{\langle r\theta,\nabla Q_{t}h(x+sr\theta)\rangle-\langle r\theta,\nabla Q_{t}h(x)\rangle}{r^{2}}dsdtdr\nu(d\theta)\\
&+-d_{1}\int_{\mathbb{S}^{d-1}}\int_{1}^{\infty}\int_{0}^{\infty}\frac{Q_{t}h(x+r\theta)-Q_{t}h(x)}{r^{2}}dtdr\nu(d\theta),
\end{align*}
and by integration by parts,
\begin{align*}
&\Big\vert \nabla Q_{t}h(x+zs)-\nabla Q_{t}h(x)\Big\vert \\
\leq& e^{-t}\int_{\mathbb{R}^{d}}\Big\vert p\big(y-(1-e^{-t})^{-1}e^{-t}(x+zs)\big)-p\big(y-(1-e^{-t})^{-1}e^{-t}x\big)\Big\vert \\
 &\qquad\qquad\qquad\qquad\qquad\qquad\qquad\qquad\qquad\qquad\quad\cdot\big\vert\nabla h\big((1-e^{-t})y\big)\big\vert dy\\
\leq&C e^{-t}\big((1-e^{-t})^{-1}e^{-t}\vert zs\vert \wedge1\big)\leq C e^{-t}\big(t^{-1}\vert zs\vert \wedge1\big),
\end{align*}
these imply
\begin{align*}
&\int_{\mathbb{S}^{d-1}}\int_{0}^{\infty}\int_{0}^{\infty}\frac{\big\vert Q_{t}h(x+r\theta)-Q_{t}h(x)-\langle r\theta,\nabla Q_{t}h(x){\bf 1}_{(0,1)}(r)\rangle\big\vert }{r^{2}}dtdr\nu(d\theta)\\
\leq&C\int_{\mathbb{S}^{d-1}}\int_{0}^{1}\int_{0}^{\infty}\frac{e^{-t}\big(t^{-1}r\wedge1\big)}{r}dtdr\nu(d\theta)
+\int_{\mathbb{S}^{d-1}}\int_{1}^{\infty}\int_{0}^{\infty}\frac{\e^{-\frac{\beta t}{\alpha}}r^{\beta}}{r^{2}}dtdr\nu(d\theta)\\
=&C\int_{0}^{1}\int_{0}^{\infty}\frac{e^{-t}\big(t^{-1}r\wedge1\big)}{r}dtdr
+\int_{1}^{\infty}\int_{0}^{\infty}\frac{e^{-\frac{\beta t}{\alpha}}r^{\beta}}{r^{2}}dtdr\leq C.
\end{align*}
If $\alpha\in(0,1),$ we have
\begin{align*}
\mathcal{L}^{\alpha,\nu}f(x)=-d_{\alpha}\int_{\mathbb{S}^{d-1}}\int_{0}^{\infty}\int_{0}^{\infty}\frac{Q_{t}h(x+r\theta)-Q_{t}h(x)}{r^{\alpha+1}}dtdr\nu(d\theta),
\end{align*}
and we have by (\ref{ineq})
\begin{align*}
&\int_{\mathbb{S}^{d-1}}\int_{0}^{\infty}\int_{0}^{\infty}\frac{\big\vert Q_{t}h(x+r\theta)-Q_{t}h(x)\big\vert }{r^{\alpha+1}}dtdr\nu(d\theta)\\
\leq&\int_{\mathbb{S}^{d-1}}\int_{0}^{\infty}\int_{0}^{\infty}\e^{-\frac{\beta t}{\alpha}}\frac{r\wedge r^{\beta}}{r^{\alpha+1}}dtdr\nu(d\theta)\\
=&\int_{0}^{\infty}\frac{r\wedge r^{\beta}}{r^{\alpha+1}}dr\int_{0}^{\infty}\e^{-\frac{\beta t}{\alpha}}dt\leq C.
\end{align*}

Therefore, by Fubini's theorem, we have
\begin{align*}
\mathcal{L}^{\alpha,\nu}f(x)=-\int_{0}^{\infty}\mathcal{L}^{\alpha,\nu}Q_{t}h(x)dt.
\end{align*}
Hence, according to Lemma \ref{A1}, we can obtain
\begin{align*}
\mathcal{A}^{\alpha,\nu}f=-\int_{0}^{\infty}\mathcal{A}^{\alpha,\nu}Q_{t}hdt=-\int_{0}^{\infty}\partial_{t}Q_{t}hdt=Q_{0}h-Q_{\infty}h,
\end{align*}
here $Q_{\infty}=\pi,$ the unique invariant distribution of the semigroup $(Q_t)_{t\ge 0}$ associated with $\mathcal{A}^{\alpha}$ by \cite[Cor. 17.9]{S}. The proof is complete.
\qed
\end{appendices}

\bigskip
{\bf Acknowledgements:} We warmly thank an anonymous referee for her/his very careful and detailed reading, constructive remarks and useful suggestions. P.\ Chen is supported by the NSF of Jiangsu Province grant BK20220867 and the Initial Scientific Research Fund of Young Teachers in Nanjing University of Aeronautics and Astronautics (1008-YAH21111). L. Xu is partially supported by NSFC No. 12071499, Macao S.A.R grant FDCT  0090/2019/A2 and University of Macau grant MYRG2020-00039-FST. 

\bigskip

{\bf Data Availability} Data sharing is not applicable to this article as no datasets were generated or analyzed during the current study. \\

{\bf Declarations} \\

{\bf Conflict of interest}

There are no competing interests to declare which arose during the preparation or publication process of this article.

\end{document}